\documentclass[numbers=enddot,12pt,final,onecolumn,notitlepage]{scrartcl}%
\usepackage[headsepline,footsepline,manualmark]{scrlayer-scrpage}
\usepackage[all,cmtip]{xy}
\usepackage{amssymb}
\usepackage{amsmath}
\usepackage{amsthm}
\usepackage{framed}
\usepackage{comment}
\usepackage{color}
\usepackage[breaklinks=True]{hyperref}
\usepackage[sc]{mathpazo}
\usepackage[T1]{fontenc}
\usepackage{needspace}
\usepackage{tabls}
\usepackage{tikz}
\usepackage{ytableau}
\providecommand{\U}[1]{\protect\rule{.1in}{.1in}}
\newcounter{exer}

\numberwithin{exer}{section}
\theoremstyle{definition}
\newtheorem{theo}{Theorem}[section]
\newenvironment{theorem}[1][]
{\begin{theo}[#1]\begin{leftbar}}
{\end{leftbar}\end{theo}}
\newtheorem{lem}[theo]{Lemma}
\newenvironment{lemma}[1][]
{\begin{lem}[#1]\begin{leftbar}}
{\end{leftbar}\end{lem}}
\newtheorem{prop}[theo]{Proposition}
\newenvironment{proposition}[1][]
{\begin{prop}[#1]\begin{leftbar}}
{\end{leftbar}\end{prop}}
\newtheorem{defi}[theo]{Definition}
\newenvironment{definition}[1][]
{\begin{defi}[#1]\begin{leftbar}}
{\end{leftbar}\end{defi}}
\newtheorem{remk}[theo]{Remark}
\newenvironment{remark}[1][]
{\begin{remk}[#1]\begin{leftbar}}
{\end{leftbar}\end{remk}}
\newtheorem{coro}[theo]{Corollary}

\newtheorem{conv}[theo]{Convention}
\newenvironment{convention}[1][]
{\begin{conv}[#1]\begin{leftbar}}
{\end{leftbar}\end{conv}}
\newtheorem{quest}[theo]{Question}
\newenvironment{question}[1][]
{\begin{quest}[#1]\begin{leftbar}}
{\end{leftbar}\end{quest}}
\newtheorem{warn}[theo]{Warning}
\newenvironment{warning}[1][]
{\begin{warn}[#1]\begin{leftbar}}
{\end{leftbar}\end{warn}}
\newtheorem{conj}[theo]{Conjecture}

\newtheorem{exam}[theo]{Example}
\newenvironment{example}[1][]
{\begin{exam}[#1]\begin{leftbar}}
{\end{leftbar}\end{exam}}
\newtheorem{exmp}[exer]{Exercise}

\let\sumnonlimits\sum
\let\prodnonlimits\prod
\let\cupnonlimits\bigcup
\let\capnonlimits\bigcap
\renewcommand{\sum}{\sumnonlimits\limits}
\renewcommand{\prod}{\prodnonlimits\limits}
\renewcommand{\bigcup}{\cupnonlimits\limits}
\renewcommand{\bigcap}{\capnonlimits\limits}
\excludecomment{verlong}
\includecomment{vershort}
\excludecomment{noncompile}

\newcommand{\arinj}{\ar@{_{(}->}}
\newcommand{\arinjrev}{\ar@{^{(}->}}
\newcommand{\arsurj}{\ar@{->>}}
\newcommand{\arelem}{\ar@{|->}}
\newcommand{\arback}{\ar@{<-}}
\newcommand{\symd}{\mathbin{\bigtriangleup}}

\usetikzlibrary{arrows.meta}
\usetikzlibrary{calc}
\usetikzlibrary{chains}
\usetikzlibrary{shapes}
\usetikzlibrary{decorations.pathmorphing}
\usetikzlibrary{lindenmayersystems}
\definecolor{darkgreen}{rgb}{0,.5,0}
\newtheoremstyle{plainsl}
{8pt plus 2pt minus 4pt}
{8pt plus 2pt minus 4pt}
{\slshape}
{0pt}
{\bfseries}
{.}
{5pt plus 1pt minus 1pt}
{}
\theoremstyle{plainsl}
\ihead{Compositions of $n$-homomorphisms, version 14 April 2026}
\ohead{page \thepage}
\begin{document}

\title{Compositions of $n$-homomorphisms}
\author{Darij Grinberg}
\date{14 April 2026}
\maketitle

\begin{abstract}
\textbf{Abstract.} We study $n$-homomorphisms in the sense of
Khudaverdian--Voronov, but generalized to maps from arbitrary rings to
arbitrary commutative rings. We show that the sum of an $n$-homomorphism and
an $m$-homomorphism is an $\left(  n+m\right)  $-homomorphism, and that the
composition of an $n$-homomorphism and an $m$-homomorphism is an
$nm$-homomorphism. The proofs are entirely combinatorial.

\end{abstract}

This note is a long answer to one of my own MathOverflow questions. The answer
(which was obtained with the help of GPT-5.4, though written up entirely on my
own) did not fit in a MathOverflow post, so I have made it into a note.

\textbf{Acknowledgments.} I have used GPT-5.4 via Christian Stump's convenient
\url{https://sciencebench.ai/} front-end. I thank Christian for his work on this.

\subsection{$n$-homomorphisms}

The notion of an $n$-homomorphism is a generalization of the notion of a
pseudorepresentation, which was introduced by Taylor in \cite{Taylor91} (based
on an idea of Wiles \cite[proof of Lemma 2.2.3]{Wiles88}).

We first introduce some notations. \emph{Rings} are always understood to be
associative and unital (but not necessarily commutative); ring morphisms
preserve the unity (unless we speak of \textquotedblleft nonunital ring
morphisms\textquotedblright). For any nonnegative integer $n$, we let $\left[
n\right]  $ denote the set $\left\{  1,2,\ldots,n\right\}  $, and we let
$S_{n}$ denote the $n$-th symmetric group (i.e., the group of permutations of
this set $\left[  n\right]  $). If $\sigma$ is a permutation of a finite set
(e.g., of $\left[  n\right]  $), then $\left(  -1\right)  ^{\sigma}$ shall
denote the sign of $\sigma$. (If $\sigma\in S_{n}$, then $\left(  -1\right)
^{\sigma}=\left(  -1\right)  ^{\ell\left(  \sigma\right)  }$, where
$\ell\left(  \sigma\right)  $ is the number of inversions of $\sigma$.) The
\emph{cycles} of a permutation $\sigma$ shall be understood in the usual
combinatorial sense; for example, the permutation
\begin{align*}
\left[  7\right]   &  \rightarrow\left[  7\right]  ,\\
i  &  \mapsto8-i
\end{align*}
of $\left[  7\right]  $ has the four cycles $\left(  1,7\right)  $, $\left(
2,6\right)  $, $\left(  3,5\right)  $ and $\left(  4\right)  $. (Fixed points
are $1$-cycles.) Note that cycles are only well-defined up to cyclic rotation;
for example, $\left(  2,6\right)  $ and $\left(  6,2\right)  $ are the same cycle.

In \cite{BucRee97}, Buchstaber and Rees defined the notion of an
$n$\emph{-homomorphism} between commutative rings; it was studied further in
\cite{BucRee01}, \cite{BucRee04} and \cite{KhuVor20} (among other works),
although mostly only for commutative $\mathbb{Q}$-algebras. We shall here
generalize it to a notion of $n$-homomorphisms from an arbitrary (not
necessarily commutative) ring $A$ to a commutative ring $B$. We define it as follows:

\begin{definition}
\label{def.1}Let $A$ be a ring, and let $B$ be a commutative ring. Let
$f:A\rightarrow B$ be any $\mathbb{Z}$-linear map.

\begin{enumerate}
\item[\textbf{(a)}] We say that $f$ is \emph{central} if we have
\[
f\left(  aa^{\prime}\right)  =f\left(  a^{\prime}a\right)  \qquad\text{for all
}a,a^{\prime}\in A.
\]

\item[\textbf{(b)}] If $f$ is central, and if $n$ is a nonnegative integer,
then we define the $\mathbb{Z}$-multilinear map $f_{n}:A^{n}\rightarrow B$ by
\begin{align}
&  f_{n}\left(  a_{1},a_{2},\ldots,a_{n}\right) \nonumber\\
&  =\sum_{\sigma\in S_{n}}\left(  -1\right)  ^{\sigma}\prod
_{\substack{c=\left(  i_{1},i_{2},\ldots,i_{k}\right)  \\\text{is a cycle of
}\sigma}}f\left(  a_{i_{1}}a_{i_{2}}\cdots a_{i_{k}}\right)
\label{eq.darij1.1}%
\end{align}
(we will see in a moment why this is well-defined). For example, for $n=3$,
this is saying that%
\begin{align*}
f_{3}\left(  a_{1},a_{2},a_{3}\right)   &  =f\left(  a_{1}\right)  f\left(
a_{2}\right)  f\left(  a_{3}\right)  -f\left(  a_{1}\right)  f\left(
a_{2}a_{3}\right)  -f\left(  a_{2}\right)  f\left(  a_{1}a_{3}\right) \\
&  \qquad-f\left(  a_{3}\right)  f\left(  a_{1}a_{2}\right)  +f\left(
a_{1}a_{2}a_{3}\right)  +f\left(  a_{1}a_{3}a_{2}\right)
\end{align*}
(here, the first addend corresponds to the permutation $\sigma
=\operatorname*{id}\in S_{3}$ with its three cycles $\left(  1\right)
,\left(  2\right)  ,\left(  3\right)  $; the second addend corresponds to the
transposition $t_{2,3}\in S_{3}$ that swaps $2$ and $3$ and has two cycles
$\left(  1\right)  $ and $\left(  2,3\right)  $; the following two addends
similarly correspond to the transpositions $t_{1,3}$ and $t_{1,2}$; and the
last two addends correspond to the two $3$-cycles in $S_{3}$).

Note that the right hand side of \eqref{eq.darij1.1} is well-defined, since
the centrality of $f$ ensures that the value $f\left(  a_{i_{1}}a_{i_{2}%
}\cdots a_{i_{k}}\right)  $ does not depend on where we start indexing the
cycle $c$ (indeed, if we replace the cycle $\left(  i_{1},i_{2},\ldots
,i_{k}\right)  $ by its cyclic rotation $\left(  i_{p},i_{p+1},\ldots
,i_{k},i_{1},i_{2},\ldots,i_{p-1}\right)  $, then the centrality of $f$ shows
that $f\left(  a_{i_{1}}a_{i_{2}}\cdots a_{i_{k}}\right)  =f\left(  a_{i_{p}%
}a_{i_{p+1}}\cdots a_{i_{k}}a_{i_{1}}a_{i_{2}}\cdots a_{i_{p-1}}\right)  $).

The map $f_{n}$ is called the $n$\emph{-Frobenius map} of $f$. For instance,
for all $a,b,c\in A$, we have
\begin{align*}
f_{0}\left(  {}\right)   &  =1;\\
f_{1}\left(  a\right)   &  =f\left(  a\right)  ;\\
f_{2}\left(  a,b\right)   &  =f\left(  a\right)  f\left(  b\right)  -f\left(
ab\right)  ;\\
f_{3}\left(  a,b,c\right)   &  =f\left(  a\right)  f\left(  b\right)  f\left(
c\right)  -f\left(  a\right)  f\left(  bc\right) \\
&  \qquad-f\left(  b\right)  f\left(  ac\right)  -f\left(  c\right)  f\left(
ab\right)  +f\left(  abc\right)  +f\left(  acb\right)  .
\end{align*}
As we will soon see (Proposition \ref{prop.2}), the map $f_{n}$ can also be
defined recursively by
\begin{align}
&  f_{n}\left(  a_{1},a_{2},\ldots,a_{n}\right) \nonumber\\
&  =f\left(  a_{n}\right)  f_{n-1}\left(  a_{1},a_{2},\ldots,a_{n-1}\right)
\nonumber\\
&  \qquad-\sum_{i=1}^{n-1}f_{n-1}\left(  a_{1},a_{2},\ldots,a_{i-1},a_{i}%
a_{n},a_{i+1},a_{i+2},\ldots,a_{n-1}\right)
.\ \ \ \ \ \ \ \ \ \ \label{eq.darij1.2}%
\end{align}

\item[\textbf{(c)}] Let $n$ be a nonnegative integer. We say that $f$ is an
$n$\emph{-homomorphism} (or \emph{Frobenius }$n$\emph{-homomorphism}) if $f$
is central and satisfies $f_{n+1}=0$ (identically).
\end{enumerate}
\end{definition}

\begin{example}
The only $0$-homomorphism is the zero map. The $1$-homomorphisms are just the
nonunital ring homomorphisms. The $2$-homomorphisms are the central maps
$f:A\rightarrow B$ that satisfy%
\begin{align*}
&  f\left(  a\right)  f\left(  b\right)  f\left(  c\right)  -f\left(
a\right)  f\left(  bc\right)  -f\left(  b\right)  f\left(  ac\right)
-f\left(  c\right)  f\left(  ab\right)  +f\left(  abc\right)  +f\left(
acb\right) \\
&  =0\qquad\text{for all }a,b,c\in A.
\end{align*}

An example of an $n$-homomorphism is the trace map $\operatorname*{Tr}%
:B^{n\times n}\rightarrow B$ from the matrix ring $B^{n\times n}$ (sending
each $n\times n$-matrix to its trace). Indeed, the fact that
$\operatorname*{Tr}\nolimits_{n+1}=0$ on $B^{n\times n}$ is known as the
\emph{fundamental trace identity} for $n\times n$-matrices, and goes back to
Frobenius; proofs can be found (e.g.) in \cite[Corollary]{Laue87},
\cite[Theorem 1]{Dotsen11}, \cite[\S VII.7.3.2]{Morel19} or \cite[Theorem 4.3
(b)]{Proces76}.
\end{example}

This general concept of an $n$-homomorphism also covers another classical
notion: that of a pseudocharacter, as considered (e.g.) in \cite{Dotsen11},
\cite[\S VII.7.3]{Morel19}, \cite[Definition 2.2]{Bellai10}, \cite{Rouqui96},
\cite{Chenev08}, generalizing the pseudorepresentations introduced by Taylor
in \cite{Taylor91} (which, in turn, were based on an idea of Wiles \cite[proof
of Lemma 2.2.3]{Wiles88}). Again, much of the literature requires that $A$ is
a $\mathbb{Q}$-algebra or at least $n!$ is invertible. If we turn a blind eye
to this requirement, then $n$-homomorphisms are more or less the same as
pseudocharacters: A $\mathbb{Z}$-linear map $f:A\rightarrow B$ from a ring $A$
to a commutative ring $B$ is an $n$-homomorphism if and only if the
corresponding $B$-linear map
\begin{align*}
\widetilde{f}:B\otimes_{\mathbb{Z}}A  &  \rightarrow B,\\
b\otimes a  &  \mapsto bf\left(  a\right)
\end{align*}
is a pseudocharacter of degree $n$. Conversely, a $B$-linear map
$f:A\rightarrow B$ from a $B$-algebra $A$ to its (commutative) base ring $B$
is a pseudocharacter of degree $n$ if and only if it is an $n$-homomorphism.
Thus, the concept of an $n$-homomorphism and that of a pseudocharacter of
degree $n$ subsume each other (again ignoring the invertibility requirement).
Apparently the two respective communities (pseudocharacters and $n$%
-homomorphisms) are mostly unaware of one another, even though both credit
Frobenius for the original idea.

In this note, I will prove certain basic (but nontrivial) properties of
$n$-homomorphisms in full generality, most importantly \cite[Theorem
3.2]{KhuVor20}, without assuming $A$ commutative or $n!$ invertible. The
method used in \cite{KhuVor20} becomes unusable in this generality, so one is
forced to do combinatorics.

The first main result of this note is a generalization of the first part of
\cite[Theorem 3.2]{KhuVor20} (also part of \cite[Lemme 2.8]{Rouqui96}%
):\footnote{The notation $\mathbb{N}$ denotes the set of all nonnegative
integers (including $0$).}

\begin{theorem}
\label{thm.1}Let $A$ and $B$ be two rings, with $B$ commutative. Let
$n,m\in\mathbb{N}$. Let $f:A\rightarrow B$ be an $n$-homomorphism, and let
$g:A\rightarrow B$ be an $m$-homomorphism. Then, $f+g$ is an $\left(
n+m\right)  $-homomorphism.
\end{theorem}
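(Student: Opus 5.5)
The plan is to prove a ``Leibniz rule'' for Frobenius maps of a sum. Concretely, for central $f,g:A\rightarrow B$ (so $f+g$ is central too) and any $N\in\mathbb{N}$, I would show
\begin{equation}
\left(  f+g\right)  _{N}\left(  a_{1},\ldots,a_{N}\right)  =\sum_{\substack{I\sqcup J=\left[  N\right] }}f_{\left\vert I\right\vert }\left(  a_{I}\right)  g_{\left\vert J\right\vert }\left(  a_{J}\right)  ,
\label{eq.plan.leib}%
\end{equation}
where the sum ranges over ordered set partitions of $\left[  N\right]  $ into two (possibly empty) blocks, and $a_{I}$ denotes the tuple $\left(  a_{i}\right)  _{i\in I}$ listed in increasing order of $i$. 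Since each $f_{k}$ is symmetric in its arguments, the order does not matter; this symmetry follows by conjugating $\sigma$ in \eqref{eq.darij1.1}, using that conjugation preserves sign and cycle structure.

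To prove \eqref{eq.plan.leib}, I would expand the left side directly from \eqref{eq.darij1.1}. Each cycle $c$ of $\sigma\in S_{N}$ contributes the factor $\left(  f+g\right)  \left(  a_{c}\right)  =f\left(  a_{c}\right)  +g\left(  a_{c}\right)  $. Expanding the product over cycles, we get a sum over all ways of coloring each cycle of $\sigma$ with ``$f$'' or ``$g$''. Such a colored permutation is the same as a choice of $I\sqcup J=\left[  N\right]  $ (the union of the $f$-colored cycles, resp. $g$-colored cycles) together with permutations $\sigma_{I}$ of $I$ and $\sigma_{J}$ of $J$ with $\sigma=\sigma_{I}\sqcup\sigma_{J}$. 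The sign is multiplicative, $\left(  -1\right)  ^{\sigma}=\left(  -1\right)  ^{\sigma_{I}}\left(  -1\right)  ^{\sigma_{J}}$, because the sign is $\left(  -1\right)  $ raised to the sum over cycles of $\left(  \text{length}-1\right)  $. Regrouping therefore gives exactly $f_{\left\vert I\right\vert }\left(  a_{I}\right)  g_{\left\vert J\right\vert }\left(  a_{J}\right)  $, using the definition \eqref{eq.darij1.1} applied to permutations of the finite sets $I$ and $J$ (transported to $S_{\left\vert I\right\vert }$ via the increasing bijection). This bijection is routine bookkeeping, and the fact that cycle values are well-defined is already ensured by centrality.

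Next I need the vanishing of higher Frobenius maps: if $f$ is an $n$-homomorphism, then $f_{k}=0$ for all $k\geq n+1$, not just for $k=n+1$. This is the step I expect to be the main obstacle, since the definition only imposes $f_{n+1}=0$. The tool is the recursion \eqref{eq.darij1.2} (Proposition \ref{prop.2}, which I may assume). It expresses $f_{k}$ as $f\left(  a_{k}\right)  f_{k-1}\left(  \ldots\right)  $ minus a sum of values of $f_{k-1}$ at other arguments. So if $f_{k-1}$ vanishes identically on $A^{k-1}$, then $f_{k}$ vanishes identically, and induction on $k$ starting from $k=n+1$ finishes it.

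Finally, set $N=n+m+1$ in \eqref{eq.plan.leib}. For each partition, $\left\vert I\right\vert +\left\vert J\right\vert =n+m+1$, so either $\left\vert I\right\vert \geq n+1$ or $\left\vert J\right\vert \geq m+1$. In the first case $f_{\left\vert I\right\vert }=0$; in the second $g_{\left\vert J\right\vert }=0$. Hence every term vanishes and $\left(  f+g\right)  _{n+m+1}=0$. Together with centrality of $f+g$, this shows $f+g$ is an $\left(  n+m\right)  $-homomorphism.
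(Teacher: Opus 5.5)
Your proposal is correct and follows essentially the same route as the paper. Your Leibniz rule is exactly Lemma \ref{lem.0}, proved by the same coloring of cycles (with Proposition \ref{prop.3g} handling the transport to $S_{|I|}$); your vanishing step is Lemma \ref{lem.5}, via the recursion \eqref{eq.darij1.2}; and your final pigeonhole on $|I|+|J|=n+m+1$ is the same as the paper's, the only cosmetic difference being that you justify the sign's multiplicativity through cycle lengths rather than through factorization into transpositions.
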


In short: the sum of an $n$-homomorphism with an $m$-homomorphism is an
$\left(  n+m\right)  $-homomorphism.

The second main result generalizes the second part of \cite[Theorem
3.2]{KhuVor20}:

\begin{theorem}
\label{thm.6}Let $A$, $B$ and $C$ be three rings, with $B$ and $C$
commutative. Let $n,m\in\mathbb{N}$. Let $f:B\rightarrow C$ be an
$n$-homomorphism, and let $g:A\rightarrow B$ be an $m$-homomorphism. Then,
$f\circ g$ is an $nm$-homomorphism.
\end{theorem}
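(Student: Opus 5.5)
The plan is to prove a general expansion identity for $\left(f\circ g\right)_N$ in terms of the Frobenius maps of $f$ and $g$ separately, and then to read off the theorem by a pigeonhole argument. Concretely, I would aim to show that for any central $g:A\rightarrow B$ and any central $f:B\rightarrow C$ (with $B$, $C$ commutative), and any $N\in\mathbb{N}$ and $a_1,\ldots,a_N\in A$, we have
\[
\left(f\circ g\right)_N\left(a_1,\ldots,a_N\right)=\sum_{\pi}f_{\left\vert \pi\right\vert }\left(  \left(  g_{\left\vert P\right\vert }\left(  a_P\right)  \right)  _{P\in\pi}\right)  ,
\]
where $\pi$ ranges over all set partitions of $\left[N\right]$ and $\left\vert \pi\right\vert$ is its number of blocks. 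Here $a_P$ denotes the tuple $\left(a_i\right)_{i\in P}$ in increasing order of $i$; the order inside $g_{\left\vert P\right\vert}$ and among the arguments of $f_{\left\vert \pi\right\vert}$ is irrelevant because Frobenius maps of central maps into commutative rings are symmetric. Small cases check out: for $N=1$ both sides are $f\left(g\left(a_1\right)\right)$. For $N=2$, the partition $\left\{\{1\},\{2\}\right\}$ contributes $f\left(ga_1\right)f\left(ga_2\right)-f\left(ga_1\cdot ga_2\right)$ and $\left\{\{1,2\}\right\}$ contributes $f\left(ga_1\cdot ga_2\right)-f\left(g\left(a_1a_2\right)\right)$. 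Their sum is $\left(f\circ g\right)_2\left(a_1,a_2\right)$, with all coefficients equal to $+1$.

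Granting the identity, the theorem follows quickly. Take $N=nm+1$. For any partition $\pi$ of $\left[N\right]$, either $\pi$ has at least $n+1$ blocks, or some block has at least $m+1$ elements (since $n$ blocks of size $\le m$ cover at most $nm$ elements). So I first need the standard fact that an $n$-homomorphism satisfies $f_k=0$ for all $k>n$, not only $k=n+1$. I would derive this from the recursion \eqref{eq.darij1.2} by induction on $k$: each term on its right-hand side involves $f_{k-1}$. With this, every summand vanishes, using either $f_{\left\vert\pi\right\vert}=0$ or the multilinearity of $f_{\left\vert\pi\right\vert}$ together with a vanishing argument $g_{\left\vert P\right\vert}\left(a_P\right)=0$. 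Hence $\left(f\circ g\right)_{nm+1}=0$. Centrality of $f\circ g$ is immediate, since $g\left(aa'\right)=g\left(a'a\right)$.

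To prove the identity, I would expand the right-hand side fully. Each $g_{\left\vert P\right\vert}\left(a_P\right)$ is a signed sum over permutations $\tau_P$ of $P$ of products $\prod_{c}g\left(a_c\right)$ over the cycles $c$ of $\tau_P$. The $\tau_P$ assemble into a permutation $\sigma$ of $\left[N\right]$ preserving $\pi$, with sign $\prod_P\left(-1\right)^{\tau_P}=\left(-1\right)^{\sigma}$. Since $f_r$ is multilinear, we then expand $f_{\left\vert\pi\right\vert}$ on these products of $g$-values. This gives $f_r$ evaluated at elements of the form $\prod_{c\subseteq P}g\left(a_c\right)$, which in turn becomes a signed sum over permutations $\rho$ of the blocks, of products of $f$ applied to products of the $g\left(a_c\right)$. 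The resulting terms are indexed by a permutation $\sigma$ of $\left[N\right]$ together with a set partition $\kappa$ of the cycles of $\sigma$; $\kappa$ records which cycles get merged into a single $f$-argument. The left-hand side is exactly the subfamily with $\kappa$ discrete (each cycle alone in its own $f$). It then remains to show that for fixed $\sigma$ and fixed nondiscrete $\kappa$, the total coefficient is zero. This coefficient is an alternating sum over the ways of grouping the cycles into $\pi$-blocks and then permuting those blocks by $\rho$ so as to produce $\kappa$. I would try a sign-reversing involution, for instance splitting off or merging a distinguished cycle between being its own $\pi$-block and lying inside a $\rho$-cycle. Alternatively, one can recognize this as the standard cancellation $\sum_{\text{set partitions}}$ versus $\sum_{\text{permutations}}$ that underlies the inversion between power sums and elementary symmetric functions.

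If this direct bookkeeping becomes unwieldy, the fallback is induction on $N$ via the recursion \eqref{eq.darij1.2}, applied to $f\circ g$, to $g$ and to $f$. When $a_N$ is appended, a partition of $\left[N\right]$ either has $\{N\}$ as a singleton block, or has $N$ inserted into an existing block. The two cases should match respectively the term $f\left(g\left(a_N\right)\right)\left(f\circ g\right)_{N-1}$ and the terms $\left(f\circ g\right)_{N-1}\left(\ldots,a_ia_N,\ldots\right)$, using the recursions for $f_r$ and $g_k$ to absorb the correction terms. The main obstacle I expect is precisely this coefficient verification, namely confirming that all coefficients in the identity really are $+1$ with no factorial weights, in either formulation. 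Everything else is routine.
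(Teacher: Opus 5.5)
Your plan is correct, and its architecture is the paper's. Your identity is exactly Theorem \ref{thm.4}. The finish (Lemma \ref{lem.5} from the recursion, the pigeonhole ``at least $n+1$ blocks, or some block of size at least $m+1$'', then multilinearity of $f_k$) is the paper's proof of Theorem \ref{thm.6} essentially verbatim.

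The paper proves the identity by your fallback induction, and your sketch of it matches. The singleton-block case is expanded with the recursion for $f$. The insertion case is expanded with $b_{Q\cup\{N\}}=b_Qb_{\{N\}}-\sum_{i\in Q}b_Q^{(i)}$, which is the recursion for $g$. The terms $f_\ell(\ldots,b_{Q_j}b_{\{N\}},\ldots)$ then cancel between the two cases. The one point you leave implicit is that $b_P^{(i)}=b_P$ whenever $i\notin P$. This is what turns $\sum_j\sum_{i\in Q_j}$ into $\sum_{i=1}^{N-1}f_\ell(b^{(i)}_{Q_1},\ldots,b^{(i)}_{Q_\ell})$, so that the induction hypothesis applies to the modified tuple $(a_1,\ldots,a_ia_N,\ldots,a_{N-1})$.

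Your primary route is genuinely different, and it also closes. Fix $\sigma$ and $\kappa$. The contributing pairs $(\pi,\rho)$ factor over the blocks $K$ of $\kappa$: each factor is a set partition of $K$ into $j$ parts together with a single $j$-cycle on those parts. So the coefficient is $(-1)^{\sigma}\prod_{K\in\kappa}\sum_j(-1)^{j-1}(j-1)!\,S(|K|,j)$, where $S$ denotes the Stirling numbers of the second kind. Each inner sum equals $\sum_{\lambda\in\Pi_K}\mu(\lambda,\hat{1})$ in the partition lattice, so it vanishes for $|K|\geq2$ and equals $1$ for $|K|=1$.

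Your suggested involution proves this vanishing directly. Fix a cycle $c_0\in K$. If $\{c_0\}$ is a part, merge it into its $\rho$-successor. Otherwise, split $c_0$ off its part and insert $\{c_0\}$ immediately before the remainder in the cyclic order. This changes $j$ by $\pm1$, hence flips the sign, and it is fixed-point-free when $|K|\geq2$.

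That route gives the non-inductive, cycle-based inclusion--exclusion proof the paper explicitly says it would welcome. The induction is more mechanical, but it needs nothing beyond \eqref{eq.darij1.2}.
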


In short: the composition of an $n$-homomorphism with an $m$-homomorphism is
an $nm$-homomorphism.

The proofs will use some auxiliary results that may well be useful on their
own. In particular, Theorem \ref{thm.4} is a formula for the $n$-Frobenius
maps of a composition of two central maps; this formula was found by the
GPT-5.4 LLM when I asked it for a proof of Theorem \ref{thm.6}.

\begin{remark}
It is worth mentioning the \textquotedblleft aspirational\textquotedblright%
\ properties of $n$-homomorphisms and pseudocharacters, as they can be
helpful. A pseudocharacter of degree $n$ wishes to be the character of an
$n$-dimensional representation of the algebra (in the sense that the latter
are always instances of the former, and in some sense \textquotedblleft
generic\textquotedblright\ ones; see, e.g., \cite[Proposition 2]{Dotsen11}).
An $n$-homomorphism wishes to be a sum of $n$ ring homomorphisms (see
\cite[\S 1.2]{KhuVor20}). The former wish can be fulfilled under certain
restrictive conditions (see, e.g., \cite[\S VII.7.3.4]{Morel19}). I don't know
when and to what extent the latter can be fulfilled. However, when it can be
fulfilled, Theorem \ref{thm.1} becomes obvious, and Theorem \ref{thm.6} also
becomes easy (if $f:B\rightarrow C$ is a sum of $n$ (nonunital) ring
homomorphisms, and if $g$ is an $m$-homomorphism, then $f\circ g$ is an
$nm$-homomorphism, as can be easily proved using Theorem \ref{thm.1}). Sadly,
this approach seems to be unsuited for the generality in which we have stated
the above theorems.
\end{remark}

\subsection{Recursion and explicit formula}

First, I will pay a debt from Definition \ref{def.1} \textbf{(b)}, by proving
the equivalence of the two definitions of $n$-Frobenius maps:

\begin{proposition}
\label{prop.2}Let $A$ be a ring, and let $B$ be a commutative ring. Let
$f:A\rightarrow B$ be a central $\mathbb{Z}$-linear map.

The recursive definition \eqref{eq.darij1.2} of $f_{n}$ (with the base case
$f_{0}\left(  {}\right)  :=1$) is equivalent to the explicit definition \eqref{eq.darij1.1}.
\end{proposition}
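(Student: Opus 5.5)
We need to show that the explicit formula \eqref{eq.darij1.1} satisfies the recursion \eqref{eq.darij1.2}, with $f_0\left(\right) = 1$. The base case is immediate: $S_0$ consists only of the empty permutation, which has no cycles, so the product is empty and equals $1$. Since the recursion determines $f_n$ uniquely from $f_{n-1}$, it will then suffice to prove that for $n\geq1$ the right hand side of \eqref{eq.darij1.1} equals the right hand side of \eqref{eq.darij1.2}, assuming \eqref{eq.darij1.1} for $n-1$.

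The plan is to split the sum over $\sigma\in S_n$ according to the position of $n$. There are two cases.

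\emph{Case 1: $\sigma(n)=n$.} Then $(n)$ is a $1$-cycle of $\sigma$, and the restriction $\tau=\sigma|_{[n-1]}\in S_{n-1}$ has the same sign as $\sigma$ and the remaining cycles of $\sigma$. So these terms sum to $f(a_n)\, f_{n-1}(a_1,\ldots,a_{n-1})$.

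\emph{Case 2: $\sigma(n)\neq n$.} Let $i=\sigma^{-1}(n)\in[n-1]$, so that $n$ sits in a cycle $(\ldots,i,n,\sigma(n),\ldots)$. Remove $n$ from this cycle, getting $\tau\in S_{n-1}$ with $\tau(i)=\sigma(n)$ and $\tau(j)=\sigma(j)$ for $j\neq i$. Equivalently, $\tau = $ restriction of $t_{i,n}\circ\sigma$; this is how I will verify the sign. Composing with a transposition flips the sign, and the result fixes $n$, so $(-1)^\tau=-(-1)^\sigma$. The map $\sigma\mapsto(i,\tau)$ is a bijection from $\{\sigma\in S_n:\sigma(n)\neq n\}$ onto $[n-1]\times S_{n-1}$. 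The inverse inserts $n$ right after $i$ in its cycle of $\tau$.

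Under this bijection, the cycles of $\sigma$ correspond to those of $\tau$. The only change is that the cycle containing $i$, say written as $(\ldots,i,\ldots)$ starting anywhere, has $i$ replaced by the consecutive pair $i,n$. Hence the factor $f(\cdots a_i a_n\cdots)$ in the $\sigma$-term is exactly the factor obtained from the $\tau$-term for the tuple $(a_1,\ldots,a_i a_n,\ldots,a_{n-1})$, while all other factors agree. Centrality is used here only through the well-definedness already established: we may rotate each cycle so that $i$ is not at the wrap-around point, or simply note that the product $a_i a_n$ stays contiguous in any rotation not splitting it.

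Summing over $\tau$ for fixed $i$ then gives $-f_{n-1}(a_1,\ldots,a_{i-1},a_ia_n,a_{i+1},\ldots,a_{n-1})$, by \eqref{eq.darij1.1} for $n-1$. Adding both cases yields \eqref{eq.darij1.2}.

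The main obstacle is bookkeeping in Case 2, namely checking rigorously that the bijection is well-defined, that the sign flips, and that the cycle products match. I will handle the sign via the identity $\tau=(t_{i,n}\sigma)|_{[n-1]}$, and the cycle products by explicitly writing the cycle of $\sigma$ through $n$ as $(n,j_1,\ldots,j_r,i)$. The corresponding cycle of $\tau$ is $(j_1,\ldots,j_r,i)$, and $f(a_n a_{j_1}\cdots a_{j_r}a_i)=f(a_{j_1}\cdots a_{j_r}a_ia_n)$ by centrality.
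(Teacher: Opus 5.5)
Your proof is essentially the paper's. You split $S_n$ according to $\sigma^{-1}(n)$ and handle $\sigma(n)=n$ by restricting to $S_{n-1}$. For $\sigma^{-1}(n)=i\in[n-1]$ you use the same bijection as the paper: your ``insert $n$ right after $i$'' is exactly $\tau\mapsto\widehat{\tau}\circ t_{i,n}$. The sign flip and the matching of the cycle $(j_1,\ldots,j_r,i,n)$ with the argument $a_ia_n$ are also the same.

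One slip needs fixing. The permutation that fixes $n$ and restricts to your $\tau$ is $\sigma\circ t_{i,n}$, not $t_{i,n}\circ\sigma$. The latter fixes $i$ and generally moves $n$: for example, $\sigma=(1,2,3)$ gives $t_{2,3}\circ\sigma=t_{1,3}$. The conclusion $(-1)^{\tau}=-(-1)^{\sigma}$ is unaffected.
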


\begin{proof}
[Proof sketch.]We use \eqref{eq.darij1.1} as the definition of $f_{n}$. Thus,
we must prove that $f_{n}$ satisfies the recursion \eqref{eq.darij1.2}.

Let $n\geq1$ and $a_{1},a_{2},\ldots,a_{n}\in A$. Then, \eqref{eq.darij1.1}
becomes
\begin{align}
&  f_{n}\left(  a_{1},a_{2},\ldots,a_{n}\right) \nonumber\\
&  =\sum_{\sigma\in S_{n}}\left(  -1\right)  ^{\sigma}\prod
_{\substack{c=\left(  i_{1},i_{2},\ldots,i_{k}\right)  \\\text{is a cycle of
}\sigma}}f\left(  a_{i_{1}}a_{i_{2}}\cdots a_{i_{k}}\right) \nonumber\\
&  =\sum_{i=1}^{n}\ \ \sum_{\substack{\sigma\in S_{n};\\\sigma\left(
i\right)  =n}}\left(  -1\right)  ^{\sigma}\prod_{\substack{c=\left(
i_{1},i_{2},\ldots,i_{k}\right)  \\\text{is a cycle of }\sigma}}f\left(
a_{i_{1}}a_{i_{2}}\cdots a_{i_{k}}\right)  \label{eq.darij2.p2.pf.1}%
\end{align}
(since each $\sigma\in S_{n}$ satisfies $\sigma\left(  i\right)  =n$ for a
unique element $i\in\left\{  1,2,\ldots,n\right\}  $). We shall now rewrite
the addends of the outer sum here, showing that they correspond (up to sign)
to the addends on the right hand side of \eqref{eq.darij1.2}.

We start with the addend for $i=n$.

Each permutation $\tau\in S_{n-1}$ can be extended to a permutation
$\widehat{\tau}\in S_{n}$ by setting $\widehat{\tau}\left(  n\right)  :=n$ and
$\widehat{\tau}\left(  i\right)  :=\tau\left(  i\right)  $ for all $i<n$. The
assignment $\tau\mapsto\widehat{\tau}$ defines a bijection from $S_{n-1}$ to
$\left\{  \sigma\in S_{n}\ \mid\ \sigma\left(  n\right)  =n\right\}  $. Let us
use this bijection to reindex a sum:
\begin{align}
&  \sum_{\substack{\sigma\in S_{n};\\\sigma\left(  n\right)  =n}}\left(
-1\right)  ^{\sigma}\prod_{\substack{c=\left(  i_{1},i_{2},\ldots
,i_{k}\right)  \\\text{is a cycle of }\sigma}}f\left(  a_{i_{1}}a_{i_{2}%
}\cdots a_{i_{k}}\right) \nonumber\\
&  =\sum_{\tau\in S_{n-1}}\underbrace{\left(  -1\right)  ^{\widehat{\tau}}%
}_{\substack{=\left(  -1\right)  ^{\tau}\\\text{(since }\widehat{\tau}\text{
has the same}\\\text{inversions as }\tau\text{)}}}\ \ \underbrace{\prod
_{\substack{c=\left(  i_{1},i_{2},\ldots,i_{k}\right)  \\\text{is a cycle of
}\widehat{\tau}}}f\left(  a_{i_{1}}a_{i_{2}}\cdots a_{i_{k}}\right)
}_{\substack{=f\left(  a_{n}\right)  \prod_{\substack{c=\left(  i_{1}%
,i_{2},\ldots,i_{k}\right)  \\\text{is a cycle of }\tau}}f\left(  a_{i_{1}%
}a_{i_{2}}\cdots a_{i_{k}}\right)  \\\text{(since the cycles of }%
\widehat{\tau}\text{ are the cycles of }\tau\\\text{plus the additional
}1\text{-cycle }\left(  n\right)  \text{)}}}\nonumber\\
&  =\sum_{\tau\in S_{n-1}}\left(  -1\right)  ^{\tau}f\left(  a_{n}\right)
\prod_{\substack{c=\left(  i_{1},i_{2},\ldots,i_{k}\right)  \\\text{is a cycle
of }\tau}}f\left(  a_{i_{1}}a_{i_{2}}\cdots a_{i_{k}}\right) \nonumber\\
&  =f\left(  a_{n}\right)  \underbrace{\sum_{\tau\in S_{n-1}}\left(
-1\right)  ^{\tau}\prod_{\substack{c=\left(  i_{1},i_{2},\ldots,i_{k}\right)
\\\text{is a cycle of }\tau}}f\left(  a_{i_{1}}a_{i_{2}}\cdots a_{i_{k}%
}\right)  }_{\substack{=f_{n-1}\left(  a_{1},a_{2},\ldots,a_{n-1}\right)
\\\text{(by the definition of }f_{n-1}\text{)}}}\nonumber\\
&  =f\left(  a_{n}\right)  f_{n-1}\left(  a_{1},a_{2},\ldots,a_{n-1}\right)  .
\label{eq.darij2.p2.pf.2}%
\end{align}

Now, let $i\in\left\{  1,2,\ldots,n-1\right\}  $. Let us define $n-1$ elements
$a_{1}^{\prime},a_{2}^{\prime},\ldots,a_{n-1}^{\prime}$ of $A$ by
\begin{align}
&  \left(  a_{1}^{\prime},a_{2}^{\prime},\ldots,a_{n-1}^{\prime}\right)
\nonumber\\
&  :=\left(  a_{1},a_{2},\ldots,a_{i-1},a_{i}a_{n},a_{i+1},a_{i+2}%
,\ldots,a_{n-1}\right)  . \label{eq.darij2.p2.pf.5a}%
\end{align}
That is, we set $a_{i}^{\prime}:=a_{i}a_{n}$ and
\begin{equation}
a_{r}^{\prime}:=a_{r}\qquad\text{for all }r\neq i. \label{eq.darij2.p2.pf.5}%
\end{equation}

Let $t_{i,n}\in S_{n}$ be the transposition that swaps $i$ with $n$.

Let $\tau\in S_{n-1}$ be any permutation. Then, the permutation $\widehat{\tau
}\in S_{n}$ (defined above) sends $n$ to $n$. Hence, the permutation
$\widehat{\tau}\circ t_{i,n}\in S_{n}$ sends $i\overset{t_{i,n}}{\mapsto
}n\overset{\widehat{\tau}}{\mapsto}n$. Moreover, this permutation
$\widehat{\tau}\circ t_{i,n}$ has \textquotedblleft almost\textquotedblright%
\ the same cycles as $\tau$: Namely, let
\[
d=\left(  j_{1},j_{2},\ldots,j_{\ell}\right)
\]
be the cycle of $\tau$ that contains $i$, indexed in such a way that
$i=j_{\ell}$. (If $\tau$ fixes $i$, then this is a $1$-cycle, i.e., we have
$\ell=1$.) Then, the cycles of $\widehat{\tau}\circ t_{i,n}$ are exactly the
cycles of $\tau$, except that the cycle $d$ is replaced by
\[
d^{\prime}:=\left(  j_{1},j_{2},\ldots,j_{\ell},n\right)  .
\]
(To see this, just observe that $\widehat{\tau}\circ t_{i,n}$ transforms the
inputs $j_{\ell}$ and $n$ as follows: $j_{\ell}=i\overset{t_{i,n}}{\mapsto
}n\overset{\widehat{\tau}}{\mapsto}n$ and $n\overset{t_{i,n}}{\mapsto
}i=j_{\ell}\overset{\widehat{\tau}}{\mapsto}j_{1}$. On all other inputs,
$\widehat{\tau}\circ t_{i,n}$ does not differ from $\tau$, since $t_{i,n}$
only affects the inputs $i$ and $n$.) In other words, the cycles of
$\widehat{\tau}\circ t_{i,n}$ are exactly the cycle $d^{\prime}=\left(
j_{1},j_{2},\ldots,j_{\ell},n\right)  $ and the cycles of $\tau$ distinct from
$d$. Therefore,
\begin{align}
&  \prod_{\substack{c=\left(  i_{1},i_{2},\ldots,i_{k}\right)  \\\text{is a
cycle of }\widehat{\tau}\circ t_{i,n}}}f\left(  a_{i_{1}}a_{i_{2}}\cdots
a_{i_{k}}\right) \nonumber\\
&  =f\left(  a_{j_{1}}a_{j_{2}}\cdots a_{j_{\ell}}a_{n}\right)  \cdot
\prod_{\substack{c=\left(  i_{1},i_{2},\ldots,i_{k}\right)  \\\text{is a cycle
of }\tau\\\text{distinct from }d}}f\left(  a_{i_{1}}a_{i_{2}}\cdots a_{i_{k}%
}\right)  . \label{eq.darij2.p2.pf.7}%
\end{align}

However, for each cycle $c=\left(  i_{1},i_{2},\ldots,i_{k}\right)  $ of
$\tau$ distinct from $d$, we have
\begin{equation}
a_{i_{1}}a_{i_{2}}\cdots a_{i_{k}}=a_{i_{1}}^{\prime}a_{i_{2}}^{\prime}\cdots
a_{i_{k}}^{\prime} \label{eq.darij2.p2.pf.8a}%
\end{equation}
(because $c\neq d$ ensures that $c$ does not contain $i$, and therefore all
elements $i_{1},i_{2},\ldots,i_{k}$ of $c$ are distinct from $i$ and thus
satisfy $a_{i_{1}}^{\prime}=a_{i_{1}}$ and $a_{i_{2}}^{\prime}=a_{i_{2}}$ and
so on (by \eqref{eq.darij2.p2.pf.5}); hence, $a_{i_{1}}^{\prime}a_{i_{2}%
}^{\prime}\cdots a_{i_{k}}^{\prime}=a_{i_{1}}a_{i_{2}}\cdots a_{i_{k}}$).
Meanwhile, the cycle $d=\left(  j_{1},j_{2},\ldots,j_{\ell}\right)  $ of
$\tau$ contains $i$ only as its last entry $j_{\ell}=i$, and therefore all the
previous elements $j_{1},j_{2},\ldots,j_{\ell-1}$ of $d$ are distinct from $i$
and thus satisfy $a_{j_{1}}^{\prime}=a_{j_{1}}$ and $a_{j_{2}}^{\prime
}=a_{j_{2}}$ and so on (by \eqref{eq.darij2.p2.pf.5}), whereas its last entry
satisfies $a_{j_{\ell}}^{\prime}=a_{i}^{\prime}=a_{i}a_{n}=a_{j_{\ell}}a_{n}$
(since $i=j_{\ell}$). Therefore,
\[
a_{j_{1}}^{\prime}a_{j_{2}}^{\prime}\cdots a_{j_{\ell-1}}^{\prime}a_{j_{\ell}%
}^{\prime}=a_{j_{1}}a_{j_{2}}\cdots a_{j_{\ell-1}}a_{j_{\ell}}a_{n}=a_{j_{1}%
}a_{j_{2}}\cdots a_{j_{\ell}}a_{n}.
\]
In other words,
\begin{equation}
a_{j_{1}}a_{j_{2}}\cdots a_{j_{\ell}}a_{n}=a_{j_{1}}^{\prime}a_{j_{2}}%
^{\prime}\cdots a_{j_{\ell-1}}^{\prime}a_{j_{\ell}}^{\prime}=a_{j_{1}}%
^{\prime}a_{j_{2}}^{\prime}\cdots a_{j_{\ell}}^{\prime}.
\label{eq.darij2.p2.pf.8}%
\end{equation}
Using \eqref{eq.darij2.p2.pf.8} and \eqref{eq.darij2.p2.pf.8a}, we can rewrite
\eqref{eq.darij2.p2.pf.7} as
\begin{align}
&  \prod_{\substack{c=\left(  i_{1},i_{2},\ldots,i_{k}\right)  \\\text{is a
cycle of }\widehat{\tau}\circ t_{i,n}}}f\left(  a_{i_{1}}a_{i_{2}}\cdots
a_{i_{k}}\right) \nonumber\\
&  =f\left(  a_{j_{1}}^{\prime}a_{j_{2}}^{\prime}\cdots a_{j_{\ell}}^{\prime
}\right)  \cdot\prod_{\substack{c=\left(  i_{1},i_{2},\ldots,i_{k}\right)
\\\text{is a cycle of }\tau\\\text{distinct from }d}}f\left(  a_{i_{1}%
}^{\prime}a_{i_{2}}^{\prime}\cdots a_{i_{k}}^{\prime}\right) \nonumber\\
&  =\prod_{\substack{c=\left(  i_{1},i_{2},\ldots,i_{k}\right)  \\\text{is a
cycle of }\tau}}f\left(  a_{i_{1}}^{\prime}a_{i_{2}}^{\prime}\cdots a_{i_{k}%
}^{\prime}\right)  . \label{eq.darij2.p2.pf.9}%
\end{align}
Finally,
\begin{align}
\left(  -1\right)  ^{\widehat{\tau}\circ t_{i,n}}  &  =\underbrace{\left(
-1\right)  ^{\widehat{\tau}}}_{\substack{=\left(  -1\right)  ^{\tau
}\\\text{(since }\widehat{\tau}\text{ has the same}\\\text{inversions as }%
\tau\text{)}}}\ \ \underbrace{\left(  -1\right)  ^{t_{i,n}}}%
_{\substack{=-1\\\text{(since transpositions}\\\text{have sign }-1\text{)}%
}}\nonumber\\
&  =-\left(  -1\right)  ^{\tau}. \label{eq.darij2.p2.pf.10}%
\end{align}

Forget that we fixed $\tau$. Thus, for each permutation $\tau\in S_{n-1}$, we
have constructed a permutation $\widehat{\tau}\circ t_{i,n}\in S_{n}$ that
sends $i$ to $n$ and satisfies \eqref{eq.darij2.p2.pf.9} and
\eqref{eq.darij2.p2.pf.10}. Moreover, the assignment $\tau\mapsto
\widehat{\tau}\circ t_{i,n}$ defines a bijection from $S_{n-1}$ to $\left\{
\sigma\in S_{n}\ \mid\ \sigma\left(  i\right)  =n\right\}  $ (because if
$\sigma\in S_{n}$ sends $i$ to $n$, then $\sigma\circ t_{i,n}^{-1}$ sends $n$
to $n$ and thus has the form $\widehat{\tau}$ for a unique $\tau\in S_{n-1}$).
Let us use this bijection to reindex a sum:
\begin{align}
&  \sum_{\substack{\sigma\in S_{n};\\\sigma\left(  i\right)  =n}}\left(
-1\right)  ^{\sigma}\prod_{\substack{c=\left(  i_{1},i_{2},\ldots
,i_{k}\right)  \\\text{is a cycle of }\sigma}}f\left(  a_{i_{1}}a_{i_{2}%
}\cdots a_{i_{k}}\right) \nonumber\\
&  =\sum_{\tau\in S_{n-1}}\underbrace{\left(  -1\right)  ^{\widehat{\tau}\circ
t_{i,n}}}_{\substack{=-\left(  -1\right)  ^{\tau}\\\text{(by
\eqref{eq.darij2.p2.pf.10})}}}\ \ \underbrace{\prod_{\substack{c=\left(
i_{1},i_{2},\ldots,i_{k}\right)  \\\text{is a cycle of }\widehat{\tau}\circ
t_{i,n}}}f\left(  a_{i_{1}}a_{i_{2}}\cdots a_{i_{k}}\right)  }%
_{\substack{=\prod_{\substack{c=\left(  i_{1},i_{2},\ldots,i_{k}\right)
\\\text{is a cycle of }\tau}}f\left(  a_{i_{1}}^{\prime}a_{i_{2}}^{\prime
}\cdots a_{i_{k}}^{\prime}\right)  \\\text{(by \eqref{eq.darij2.p2.pf.9})}%
}}\nonumber\\
&  =-\underbrace{\sum_{\tau\in S_{n-1}}\left(  -1\right)  ^{\tau}%
\prod_{\substack{c=\left(  i_{1},i_{2},\ldots,i_{k}\right)  \\\text{is a cycle
of }\tau}}f\left(  a_{i_{1}}^{\prime}a_{i_{2}}^{\prime}\cdots a_{i_{k}%
}^{\prime}\right)  }_{\substack{=f_{n-1}\left(  a_{1}^{\prime},a_{2}^{\prime
},\ldots,a_{n-1}^{\prime}\right)  \\\text{(by the definition of }%
f_{n-1}\text{)}}}\nonumber\\
&  =-f_{n-1}\left(  a_{1}^{\prime},a_{2}^{\prime},\ldots,a_{n-1}^{\prime
}\right) \nonumber\\
&  =-f_{n-1}\left(  a_{1},a_{2},\ldots,a_{i-1},a_{i}a_{n},a_{i+1}%
,a_{i+2},\ldots,a_{n-1}\right)  \label{eq.darij2.p2.pf.15}%
\end{align}
(by \eqref{eq.darij2.p2.pf.5a}).

Forget that we fixed $i$. So we have proved \eqref{eq.darij2.p2.pf.15} for
each $i\in\left\{  1,2,\ldots,n-1\right\}  $. Now, splitting off the $i=n$
addend from the outer sum on the right hand side of \eqref{eq.darij2.p2.pf.1},
we obtain
\begin{align*}
f_{n}\left(  a_{1},a_{2},\ldots,a_{n}\right)   &  =\underbrace{\sum
_{\substack{\sigma\in S_{n};\\\sigma\left(  n\right)  =n}}\left(  -1\right)
^{\sigma}\prod_{\substack{c=\left(  i_{1},i_{2},\ldots,i_{k}\right)
\\\text{is a cycle of }\sigma}}f\left(  a_{i_{1}}a_{i_{2}}\cdots a_{i_{k}%
}\right)  }_{\substack{=f\left(  a_{n}\right)  f_{n-1}\left(  a_{1}%
,a_{2},\ldots,a_{n-1}\right)  \\\text{(by \eqref{eq.darij2.p2.pf.2})}}}\\
&  \qquad+\sum_{i=1}^{n-1}\ \ \underbrace{\sum_{\substack{\sigma\in
S_{n};\\\sigma\left(  i\right)  =n}}\left(  -1\right)  ^{\sigma}%
\prod_{\substack{c=\left(  i_{1},i_{2},\ldots,i_{k}\right)  \\\text{is a cycle
of }\sigma}}f\left(  a_{i_{1}}a_{i_{2}}\cdots a_{i_{k}}\right)  }%
_{\substack{=-f_{n-1}\left(  a_{1},a_{2},\ldots,a_{i-1},a_{i}a_{n}%
,a_{i+1},a_{i+2},\ldots,a_{n-1}\right)  \\\text{(by
\eqref{eq.darij2.p2.pf.15})}}}\\
&  =f\left(  a_{n}\right)  f_{n-1}\left(  a_{1},a_{2},\ldots,a_{n-1}\right) \\
&  \qquad-\sum_{i=1}^{n-1}f_{n-1}\left(  a_{1},a_{2},\ldots,a_{i-1},a_{i}%
a_{n},a_{i+1},a_{i+2},\ldots,a_{n-1}\right)  .
\end{align*}
This proves \eqref{eq.darij1.2}. Thus, Proposition \ref{prop.2} is proved.
\end{proof}

\subsection{Symmetry of $n$-Frobenius maps}

Next, we note something simple:

\begin{proposition}
\label{prop.3}Let $A$ be a ring, and let $B$ be a commutative ring. Let
$f:A\rightarrow B$ be a central $\mathbb{Z}$-linear map. Let $n\in\mathbb{N}$.
Then, the map $f_{n}:A^{n}\rightarrow B$ is symmetric in its $n$ inputs. That
is, if $\tau\in S_{n}$ is any permutation and $a_{1},a_{2},\ldots,a_{n}\in A$,
then
\[
f_{n}\left(  a_{\tau\left(  1\right)  },a_{\tau\left(  2\right)  }%
,\ldots,a_{\tau\left(  n\right)  }\right)  =f_{n}\left(  a_{1},a_{2}%
,\ldots,a_{n}\right)  .
\]

\end{proposition}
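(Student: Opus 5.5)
We will work directly with the explicit formula \eqref{eq.darij1.1} and conjugate the summation index by $\tau$. Set $b_{j}:=a_{\tau\left(j\right)}$ for all $j\in\left[n\right]$. Then
\[
f_{n}\left(b_{1},\ldots,b_{n}\right)=\sum_{\sigma\in S_{n}}\left(-1\right)^{\sigma}\prod_{\substack{c=\left(i_{1},\ldots,i_{k}\right)\\\text{is a cycle of }\sigma}}f\left(a_{\tau\left(i_{1}\right)}a_{\tau\left(i_{2}\right)}\cdots a_{\tau\left(i_{k}\right)}\right).
\]
We then reindex this sum via the bijection $S_{n}\rightarrow S_{n},\ \sigma\mapsto\rho:=\tau\circ\sigma\circ\tau^{-1}$, which is a bijection because conjugation by $\tau$ is invertible (its inverse is conjugation by $\tau^{-1}$).

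Two facts about this bijection are needed. First, the sign is preserved: $\left(-1\right)^{\tau\sigma\tau^{-1}}=\left(-1\right)^{\tau}\left(-1\right)^{\sigma}\left(-1\right)^{\tau}=\left(-1\right)^{\sigma}$, since the sign is multiplicative and $\left(-1\right)^{\tau^{-1}}=\left(-1\right)^{\tau}$. Second, the cycles correspond: if $\left(i_{1},i_{2},\ldots,i_{k}\right)$ is a cycle of $\sigma$, then $\left(\tau\left(i_{1}\right),\ldots,\tau\left(i_{k}\right)\right)$ is a cycle of $\rho$. Indeed,
\[
\rho\left(\tau\left(i_{j}\right)\right)=\tau\left(\sigma\left(i_{j}\right)\right)=\tau\left(i_{j+1}\right),
\]
with indices taken cyclically. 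Since these image cycles partition $\left[n\right]$, they are all the cycles of $\rho$, and this gives a bijection between the cycles of $\sigma$ and those of $\rho$.

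Substituting both facts, each addend for $\sigma$ becomes
\[
\left(-1\right)^{\rho}\prod_{\substack{c=\left(j_{1},\ldots,j_{k}\right)\\\text{is a cycle of }\rho}}f\left(a_{j_{1}}\cdots a_{j_{k}}\right).
\]
Because $B$ is commutative, the order of the factors in the product does not matter. Because $f$ is central, the value $f\left(a_{j_{1}}\cdots a_{j_{k}}\right)$ does not depend on the starting point chosen for each cycle, as already noted after Definition \ref{def.1}. Summing over all $\rho\in S_{n}$ then gives $f_{n}\left(a_{1},\ldots,a_{n}\right)$, which is the claim.

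There is no real obstacle here. The only point requiring care is the bookkeeping in the cycle correspondence: we must check that each cycle of $\rho$ is read off in an order compatible with $\sigma$'s cycle, so that the product inside $f$ is the same word rather than a reversed one. The displayed identity $\rho\left(\tau\left(i_{j}\right)\right)=\tau\left(i_{j+1}\right)$ settles this, since it shows the image cycle follows the same cyclic order as the original.
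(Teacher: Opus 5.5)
Your proposal is correct and is the paper's proof: expand via \eqref{eq.darij1.1}, reindex by the conjugation bijection $\sigma\mapsto\tau\circ\sigma\circ\tau^{-1}$, and use that conjugation preserves the sign and sends each cycle $\left(i_{1},\ldots,i_{k}\right)$ of $\sigma$ to the cycle $\left(\tau\left(i_{1}\right),\ldots,\tau\left(i_{k}\right)\right)$ of $\tau\circ\sigma\circ\tau^{-1}$. Your check $\rho\left(\tau\left(i_{j}\right)\right)=\tau\left(i_{j+1}\right)$ just verifies explicitly (including the cyclic order) the cycle correspondence that the paper asserts without proof.
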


\begin{proof}
Let $\tau\in S_{n}$ and $a_{1},a_{2},\ldots,a_{n}\in A$. Then,
\eqref{eq.darij1.1} yields
\begin{align}
&  f_{n}\left(  a_{1},a_{2},\ldots,a_{n}\right) \nonumber\\
&  =\sum_{\sigma\in S_{n}}\left(  -1\right)  ^{\sigma}\prod
_{\substack{c=\left(  i_{1},i_{2},\ldots,i_{k}\right)  \\\text{is a cycle of
}\sigma}}f\left(  a_{i_{1}}a_{i_{2}}\cdots a_{i_{k}}\right) \nonumber\\
&  =\sum_{\sigma\in S_{n}}\left(  -1\right)  ^{\sigma}\prod
_{\substack{c=\left(  j_{1},j_{2},\ldots,j_{k}\right)  \\\text{is a cycle of
}\sigma}}f\left(  a_{j_{1}}a_{j_{2}}\cdots a_{j_{k}}\right)
\label{eq.darij2.p3.pf.0}%
\end{align}
and
\begin{align}
&  f_{n}\left(  a_{\tau\left(  1\right)  },a_{\tau\left(  2\right)  }%
,\ldots,a_{\tau\left(  n\right)  }\right) \nonumber\\
&  =\sum_{\sigma\in S_{n}}\left(  -1\right)  ^{\sigma}\prod
_{\substack{c=\left(  i_{1},i_{2},\ldots,i_{k}\right)  \\\text{is a cycle of
}\sigma}}f\left(  a_{\tau\left(  i_{1}\right)  }a_{\tau\left(  i_{2}\right)
}\cdots a_{\tau\left(  i_{k}\right)  }\right)  . \label{eq.darij2.p3.pf.1}%
\end{align}
However, recall that conjugate permutations have the same cycle type. More
concretely: If $\sigma\in S_{n}$ is any permutation, then the cycles of the
permutation $\tau\circ\sigma\circ\tau^{-1}$ are in bijection with the cycles
of $\sigma$: Namely, if $\left(  i_{1},i_{2},\ldots,i_{k}\right)  $ is a cycle
of $\sigma$, then $\left(  \tau\left(  i_{1}\right)  ,\tau\left(
i_{2}\right)  ,\ldots,\tau\left(  i_{k}\right)  \right)  $ is a cycle of
$\tau\circ\sigma\circ\tau^{-1}$, and this gives a 1-to-1 correspondence
between the cycles of $\sigma$ and the cycles of $\tau\circ\sigma\circ
\tau^{-1}$. Thus, if $\sigma\in S_{n}$ is any permutation, then
\begin{align*}
&  \prod_{\substack{c=\left(  i_{1},i_{2},\ldots,i_{k}\right)  \\\text{is a
cycle of }\sigma}}f\left(  a_{\tau\left(  i_{1}\right)  }a_{\tau\left(
i_{2}\right)  }\cdots a_{\tau\left(  i_{k}\right)  }\right) \\
&  =\prod_{\substack{c=\left(  j_{1},j_{2},\ldots,j_{k}\right)  \\\text{is a
cycle of }\tau\circ\sigma\circ\tau^{-1}}}f\left(  a_{j_{1}}a_{j_{2}}\cdots
a_{j_{k}}\right)  .
\end{align*}
Substituting this into \eqref{eq.darij2.p3.pf.1}, we obtain
\begin{align*}
&  f_{n}\left(  a_{\tau\left(  1\right)  },a_{\tau\left(  2\right)  }%
,\ldots,a_{\tau\left(  n\right)  }\right) \\
&  =\sum_{\sigma\in S_{n}}\underbrace{\left(  -1\right)  ^{\sigma}%
}_{\substack{=\left(  -1\right)  ^{\tau\circ\sigma\circ\tau^{-1}%
}\\\text{(since conjugate permutations}\\\text{have the same sign)}}%
}\ \ \prod_{\substack{c=\left(  j_{1},j_{2},\ldots,j_{k}\right)  \\\text{is a
cycle of }\tau\circ\sigma\circ\tau^{-1}}}f\left(  a_{j_{1}}a_{j_{2}}\cdots
a_{j_{k}}\right) \\
&  =\sum_{\sigma\in S_{n}}\left(  -1\right)  ^{\tau\circ\sigma\circ\tau^{-1}%
}\prod_{\substack{c=\left(  j_{1},j_{2},\ldots,j_{k}\right)  \\\text{is a
cycle of }\tau\circ\sigma\circ\tau^{-1}}}f\left(  a_{j_{1}}a_{j_{2}}\cdots
a_{j_{k}}\right) \\
&  =\sum_{\sigma\in S_{n}}\left(  -1\right)  ^{\sigma}\prod
_{\substack{c=\left(  j_{1},j_{2},\ldots,j_{k}\right)  \\\text{is a cycle of
}\sigma}}f\left(  a_{j_{1}}a_{j_{2}}\cdots a_{j_{k}}\right)
\end{align*}
(here, we have substituted $\sigma$ for $\tau\circ\sigma\circ\tau^{-1}$ in the
sum, because conjugation by $\tau$ is a bijection from $S_{n}$ onto itself).
Comparing this with \eqref{eq.darij2.p3.pf.0}, we obtain $f_{n}\left(
a_{\tau\left(  1\right)  },a_{\tau\left(  2\right)  },\ldots,a_{\tau\left(
n\right)  }\right)  =f_{n}\left(  a_{1},a_{2},\ldots,a_{n}\right)  $. This
proves Proposition \ref{prop.3}.
\end{proof}

The proof of Proposition \ref{prop.3} can be somewhat generalized, essentially
replacing the permutation $\tau$ by a bijection between two finite sets of
integers. We record the result, since it will prove useful later on:

\begin{proposition}
\label{prop.3g}Let $A$ be a ring, and let $B$ be a commutative ring. Let
$f:A\rightarrow B$ be a central $\mathbb{Z}$-linear map.

Let $U=\left\{  u_{1},u_{2},\ldots,u_{n}\right\}  $ be a finite set of
integers (with $u_{1},u_{2},\ldots,u_{n}$ distinct). Let $S_{U}$ denote the
group of all permutations of $U$. Let $a_{u}$ be an element of $A$ for each
$u\in U$. Then,%
\[
f_{n}\left(  a_{u_{1}},a_{u_{2}},\ldots,a_{u_{n}}\right)  =\sum_{\alpha\in
S_{U}}\left(  -1\right)  ^{\alpha}\prod\limits_{\substack{c=\left(
i_{1},i_{2},\ldots,i_{k}\right)  \\\text{is a cycle of }\alpha}}f\left(
a_{i_{1}}a_{i_{2}}\cdots a_{i_{k}}\right)  .
\]

\end{proposition}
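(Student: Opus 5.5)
We transport the sum along the bijection $\phi:\left[  n\right]  \rightarrow U$, $i\mapsto u_{i}$, exactly as in the proof of Proposition \ref{prop.3}, with conjugation by $\tau$ replaced by conjugation by $\phi$. Define $b_{i}:=a_{u_{i}}$ for each $i\in\left[  n\right]  $. By the definition \eqref{eq.darij1.1},
\[
f_{n}\left(  a_{u_{1}},a_{u_{2}},\ldots,a_{u_{n}}\right)  =\sum_{\sigma\in S_{n}}\left(  -1\right)  ^{\sigma}\prod_{\substack{c=\left(  i_{1},i_{2},\ldots,i_{k}\right)  \\\text{is a cycle of }\sigma}}f\left(  a_{u_{i_{1}}}a_{u_{i_{2}}}\cdots a_{u_{i_{k}}}\right)  .
\]
We now reindex this sum using the map $S_{n}\rightarrow S_{U}$, $\sigma\mapsto\phi\circ\sigma\circ\phi^{-1}$. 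This map is a bijection, with inverse $\alpha\mapsto\phi^{-1}\circ\alpha\circ\phi$.

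For each $\sigma\in S_{n}$, write $\alpha:=\phi\circ\sigma\circ\phi^{-1}$. Two facts need checking. The first is that $\left(  i_{1},i_{2},\ldots,i_{k}\right)  $ is a cycle of $\sigma$ if and only if $\left(  u_{i_{1}},u_{i_{2}},\ldots,u_{i_{k}}\right)  $ is a cycle of $\alpha$. This holds because $\alpha\left(  u_{i}\right)  =u_{\sigma\left(  i\right)  }$, and it gives a bijection between the cycles of $\sigma$ and those of $\alpha$. Consequently the product over cycles of $\sigma$ above equals $\prod_{\left(  j_{1},\ldots,j_{k}\right)  \text{ cycle of }\alpha}f\left(  a_{j_{1}}\cdots a_{j_{k}}\right)  $. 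Well-definedness under cyclic rotation is again guaranteed by the centrality of $f$.

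The second fact is that $\left(  -1\right)  ^{\alpha}=\left(  -1\right)  ^{\sigma}$. This is the only point needing a little care, since the sign of a permutation of an arbitrary finite set $U$ must be defined intrinsically rather than via inversions in $\left[  n\right]  $. I would define it, or recall it, as $\left(  -1\right)  ^{\left\vert U\right\vert -\text{(number of cycles)}}$, or equivalently as the parity of the number of transpositions in any factorization. Either characterization is invariant under transport along a bijection. The cycle-count formula makes this immediate from the first fact, since $\sigma$ and $\alpha$ have the same number of cycles and $\left\vert U\right\vert =n$. (If one instead orders $U$ increasingly and counts inversions, one uses that this agrees with the cycle-count formula.)

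Substituting both facts and reindexing by the bijection $\sigma\mapsto\alpha$ yields the claimed formula. The main obstacle is merely bookkeeping, namely making the sign on $S_{U}$ precise. Everything else copies the proof of Proposition \ref{prop.3} verbatim.
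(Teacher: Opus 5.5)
Your proposal is correct and follows essentially the same route as the paper: transport along the bijection $i\mapsto u_{i}$ via the conjugation isomorphism $S_{n}\rightarrow S_{U}$, $\sigma\mapsto\phi\circ\sigma\circ\phi^{-1}$, matching the cycles of $\sigma$ with those of $\alpha$ and checking that the signs agree. The only cosmetic difference is that you justify $\left(-1\right)^{\alpha}=\left(-1\right)^{\sigma}$ via the cycle-count formula, whereas the paper factors $\sigma$ into transpositions.
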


\begin{proof}
Let $\tau:\left[  n\right]  \rightarrow U$ be the map that sends each
$i\in\left[  n\right]  $ to $u_{i}$. This map $\tau$ is a bijection (indeed,
it is surjective because $U=\left\{  u_{1},u_{2},\ldots,u_{n}\right\}  $, and
it is injective since $u_{1},u_{2},\ldots,u_{n}$ are distinct). Hence, for
each permutation $\sigma\in S_{n}$, the composition $\tau\circ\sigma\circ
\tau^{-1}$ is a permutation of $U$. Thus, we obtain a map%
\begin{align}
S_{n}  &  \rightarrow S_{U},\nonumber\\
\sigma &  \mapsto\tau\circ\sigma\circ\tau^{-1}, \label{pf.prop.3g.bij}%
\end{align}
which is easily seen to be a group isomorphism. Intuitively speaking,
$\tau\circ\sigma\circ\tau^{-1}$ is what you obtain if you take the permutation
$\sigma$ of $\left[  n\right]  $ and rename each element $i\in\left[
n\right]  $ (say, in the two-line notation of $\sigma$, or in the cycle
decomposition of $\sigma$) as $\tau\left(  i\right)  $. In particular, the
cycles of the permutation $\tau\circ\sigma\circ\tau^{-1}$ (for a given
$\sigma\in S_{n}$) are in bijection with the cycles of $\sigma$: Namely, if
$\left(  i_{1},i_{2},\ldots,i_{k}\right)  $ is a cycle of $\sigma$, then
$\left(  \tau\left(  i_{1}\right)  ,\tau\left(  i_{2}\right)  ,\ldots
,\tau\left(  i_{k}\right)  \right)  $ is a cycle of $\tau\circ\sigma\circ
\tau^{-1}$, and this gives a 1-to-1 correspondence between the cycles of
$\sigma$ and the cycles of $\tau\circ\sigma\circ\tau^{-1}$. Thus, if
$\sigma\in S_{n}$ is any permutation, then
\begin{align}
&  \prod_{\substack{c=\left(  i_{1},i_{2},\ldots,i_{k}\right)  \\\text{is a
cycle of }\sigma}}f\left(  a_{\tau\left(  i_{1}\right)  }a_{\tau\left(
i_{2}\right)  }\cdots a_{\tau\left(  i_{k}\right)  }\right) \nonumber\\
&  =\prod_{\substack{c=\left(  j_{1},j_{2},\ldots,j_{k}\right)  \\\text{is a
cycle of }\tau\circ\sigma\circ\tau^{-1}}}f\left(  a_{j_{1}}a_{j_{2}}\cdots
a_{j_{k}}\right)  . \label{pf.prop.3g.3}%
\end{align}
Moreover, if $\sigma\in S_{n}$ is any permutation, then%
\begin{equation}
\left(  -1\right)  ^{\sigma}=\left(  -1\right)  ^{\tau\circ\sigma\circ
\tau^{-1}}. \label{pf.prop.3g.4}%
\end{equation}
(This is easiest to see by factoring $\sigma$ into a product of
transpositions; see \cite[Exercise 5.12]{detnotes}.)

But each $i\in\left[  n\right]  $ satisfies $a_{u_{i}}=a_{\tau\left(
i\right)  }$ (since the definition of $\tau$ yields $\tau\left(  i\right)
=u_{i}$, thus $a_{\tau\left(  i\right)  }=a_{u_{i}}$). Therefore,%
\begin{align*}
&  f_{n}\left(  a_{u_{1}},a_{u_{2}},\ldots,a_{u_{n}}\right) \\
&  =f_{n}\left(  a_{\tau\left(  1\right)  },a_{\tau\left(  2\right)  }%
,\ldots,a_{\tau\left(  n\right)  }\right) \\
&  =\sum_{\sigma\in S_{n}}\underbrace{\left(  -1\right)  ^{\sigma}%
}_{\substack{=\left(  -1\right)  ^{\tau\circ\sigma\circ\tau^{-1}}\\\text{(by
\eqref{pf.prop.3g.4})}}}\ \ \underbrace{\prod_{\substack{c=\left(  i_{1}%
,i_{2},\ldots,i_{k}\right)  \\\text{is a cycle of }\sigma}}f\left(
a_{\tau\left(  i_{1}\right)  }a_{\tau\left(  i_{2}\right)  }\cdots
a_{\tau\left(  i_{k}\right)  }\right)  }_{\substack{=\prod
_{\substack{c=\left(  j_{1},j_{2},\ldots,j_{k}\right)  \\\text{is a cycle of
}\tau\circ\sigma\circ\tau^{-1}}}f\left(  a_{j_{1}}a_{j_{2}}\cdots a_{j_{k}%
}\right)  \\\text{(by \eqref{pf.prop.3g.3})}}}\\
&  \qquad\qquad\left(  \text{by \eqref{eq.darij1.1}, applied to }%
a_{\tau\left(  i\right)  }\text{ instead of }a_{i}\right) \\
&  =\sum_{\sigma\in S_{n}}\left(  -1\right)  ^{\tau\circ\sigma\circ\tau^{-1}%
}\prod_{\substack{c=\left(  j_{1},j_{2},\ldots,j_{k}\right)  \\\text{is a
cycle of }\tau\circ\sigma\circ\tau^{-1}}}f\left(  a_{j_{1}}a_{j_{2}}\cdots
a_{j_{k}}\right) \\
&  =\sum_{\alpha\in S_{U}}\left(  -1\right)  ^{\alpha}\prod
\limits_{\substack{c=\left(  j_{1},j_{2},\ldots,j_{k}\right)  \\\text{is a
cycle of }\alpha}}f\left(  a_{j_{1}}a_{j_{2}}\cdots a_{j_{k}}\right)
\end{align*}
(here, we have substituted $\alpha$ for $\tau\circ\sigma\circ\tau^{-1}$ in the
sum, since the map \eqref{pf.prop.3g.bij} is a bijection). Renaming the index
$\left(  j_{1},j_{2},\ldots,j_{k}\right)  $ as $\left(  i_{1},i_{2}%
,\ldots,i_{k}\right)  $ on the right hand side, we can rewrite this as%
\[
f_{n}\left(  a_{u_{1}},a_{u_{2}},\ldots,a_{u_{n}}\right)  =\sum_{\alpha\in
S_{U}}\left(  -1\right)  ^{\alpha}\prod\limits_{\substack{c=\left(
i_{1},i_{2},\ldots,i_{k}\right)  \\\text{is a cycle of }\alpha}}f\left(
a_{i_{1}}a_{i_{2}}\cdots a_{i_{k}}\right)  .
\]
Thus, Proposition \ref{prop.3g} is proved.
\end{proof}

\subsection{The sum of two $n$-homomorphisms}

The proof of Theorem \ref{thm.1} will require two lemmas:

\begin{lemma}
\label{lem.5}Let $A$ be a ring, and let $B$ be a commutative ring. Let
$f:A\rightarrow B$ be any central $\mathbb{Z}$-linear map. Let $m\in
\mathbb{N}$ be such that $f_{m}=0$. Then, $f_{n}=0$ for all $n\geq m$.
\end{lemma}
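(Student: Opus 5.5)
This follows by induction on $n$ from the recursion \eqref{eq.darij1.2} established in Proposition \ref{prop.2}. The base case $n=m$ is the hypothesis $f_{m}=0$. For the induction step, we assume $f_{n-1}=0$ identically for some $n-1\geq m$ and show that $f_{n}=0$.

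Fix arbitrary $a_{1},a_{2},\ldots,a_{n}\in A$. By \eqref{eq.darij1.2},
\[
f_{n}\left(  a_{1},\ldots,a_{n}\right)  =f\left(  a_{n}\right)  f_{n-1}\left(
a_{1},\ldots,a_{n-1}\right)  -\sum_{i=1}^{n-1}f_{n-1}\left(  a_{1}%
,\ldots,a_{i-1},a_{i}a_{n},a_{i+1},\ldots,a_{n-1}\right)  .
\]
Every $f_{n-1}$ term on the right is a value of $f_{n-1}$ at some $(n-1)$-tuple of elements of $A$. The products $a_{i}a_{n}$ are again elements of $A$, so by the induction hypothesis every one of these terms vanishes. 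Hence $f_{n}\left(  a_{1},\ldots,a_{n}\right)  =0$. Since the $a_{i}$ were arbitrary, $f_{n}=0$, which completes the induction.

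There is essentially no obstacle. The only point to note is that the recursion expresses $f_{n}$ purely through values of $f_{n-1}$, multiplied by values of $f$. This is exactly what Proposition \ref{prop.2} provides, and it is why the vanishing of $f_{m}$ on all of $A^{m}$, not just at particular arguments, is needed. The edge case $m=0$ is vacuous, since $f_{0}=1\neq0$ unless $B=0$, in which case everything vanishes; the same induction covers it anyway.
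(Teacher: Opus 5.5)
Your proof is correct and takes the same approach as the paper: induction on $n$, using the recursion \eqref{eq.darij1.2} from Proposition~\ref{prop.2} to show that $f_{n-1}=0$ identically forces $f_{n}=0$. You write out the step the paper leaves as a sketch: the arguments $a_{i}a_{n}$ lie in $A$, so every $f_{n-1}$ term in the recursion vanishes.
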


\begin{proof}
[Proof sketch.]If some positive integer $n$ satisfies $f_{n-1}=0$, then
$f_{n}=0$ as well (by the recursion \eqref{eq.darij1.2}). Hence, Lemma
\ref{lem.5} follows easily by induction on $n$.
\end{proof}

\begin{lemma}
\label{lem.0}Let $A$ and $B$ be two rings, with $B$ commutative. Let
$f:A\rightarrow B$ and $g:A\rightarrow B$ be two central $\mathbb{Z}$-linear
maps. Let $a_{1},a_{2},\ldots,a_{p}\in A$ be some elements.

For any subset $I=\left\{  i_{1},i_{2},\ldots,i_{k}\right\}  $ of $\left[
p\right]  $ (with $i_{1},i_{2},\ldots,i_{k}$ distinct) and any central map
$h:A\rightarrow B$, let us define
\[
h_{I}\left(  a\right)  :=h_{k}\left(  a_{i_{1}},a_{i_{2}},\ldots,a_{i_{k}%
}\right)  \in B.
\]
(This is well-defined, i.e., does not depend on the order in which we label
the elements of $I$ as $i_{1},i_{2},\ldots,i_{k}$, because Proposition
\ref{prop.3} shows that the map $h_{k}$ is symmetric.)

Then,%
\[
\left(  f+g\right)  _{p}\left(  a_{1},a_{2},\ldots,a_{p}\right)
=\sum\limits_{U\sqcup V=\left[  p\right]  }f_{U}\left(  a\right)  \cdot
g_{V}\left(  a\right)  .
\]
Here, the notation \textquotedblleft$U\sqcup V=\left[  p\right]
$\textquotedblright\ means that $U$ and $V$ are two disjoint sets whose union
is $\left[  p\right]  $ (that is, $U$ is a subset of $\left[  p\right]  $, and
$V$ is its complement in $\left[  p\right]  $).
\end{lemma}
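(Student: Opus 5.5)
We expand the left-hand side directly from the explicit formula \eqref{eq.darij1.1}, applied to the central map $f+g$ (which is central since $f$ and $g$ are). This gives
\[
\left(f+g\right)_{p}\left(a_{1},\ldots,a_{p}\right)=\sum_{\sigma\in S_{p}}\left(-1\right)^{\sigma}\prod_{\substack{c=\left(i_{1},\ldots,i_{k}\right)\\\text{is a cycle of }\sigma}}\left(f\left(a_{i_{1}}\cdots a_{i_{k}}\right)+g\left(a_{i_{1}}\cdots a_{i_{k}}\right)\right).
\]
For each $\sigma$ we expand the product over cycles. A term of the expansion amounts to choosing, for each cycle of $\sigma$, whether it is labelled ``$f$'' or ``$g$''. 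Let $U$ be the union of the cycles labelled $f$, and $V$ the union of those labelled $g$. Then $U\sqcup V=\left[p\right]$, and both $U$ and $V$ are $\sigma$-invariant. Conversely, a pair $\left(\sigma,\text{labelling}\right)$ is the same thing as a triple $\left(U,V,\sigma\right)$ with $U\sqcup V=\left[p\right]$ and $\sigma\left(U\right)=U$, since the labelling is forced by $U$. Reindexing, the sum becomes
\[
\sum_{U\sqcup V=\left[p\right]}\ \sum_{\substack{\sigma\in S_{p};\\\sigma\left(U\right)=U}}\left(-1\right)^{\sigma}\prod_{\substack{c\text{ cycle of }\sigma\\c\subseteq U}}f\left(\cdots\right)\prod_{\substack{c\text{ cycle of }\sigma\\c\subseteq V}}g\left(\cdots\right).
\]

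Next, fix $U\sqcup V=\left[p\right]$. Permutations $\sigma\in S_{p}$ preserving $U$ (and hence $V$) correspond bijectively to pairs $\left(\alpha,\beta\right)\in S_{U}\times S_{V}$ via restriction. Under this correspondence, the cycles of $\sigma$ are the cycles of $\alpha$ together with the cycles of $\beta$, and $\left(-1\right)^{\sigma}=\left(-1\right)^{\alpha}\left(-1\right)^{\beta}$. The sign identity holds because the number of cycles is additive and $\left(-1\right)^{\sigma}=\left(-1\right)^{p-\#\text{cycles}}$; alternatively, write $\sigma$ as a product of transpositions within $U$ and within $V$. The inner sum therefore factors as
\[
\left(\sum_{\alpha\in S_{U}}\left(-1\right)^{\alpha}\prod_{c\text{ cycle of }\alpha}f\left(\cdots\right)\right)\left(\sum_{\beta\in S_{V}}\left(-1\right)^{\beta}\prod_{c\text{ cycle of }\beta}g\left(\cdots\right)\right).
\]
By Proposition \ref{prop.3g}, the two factors are $f_{U}\left(a\right)$ and $g_{V}\left(a\right)$. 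Summing over all $U\sqcup V=\left[p\right]$ then yields the claim. The degenerate cases $U=\varnothing$ or $V=\varnothing$ are covered by $h_{0}\left(\right)=1$ and the empty product.

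The only step needing genuine care is the sign identity $\left(-1\right)^{\sigma}=\left(-1\right)^{\alpha}\left(-1\right)^{\beta}$ for the sign of a permutation of $U$ or $V$ viewed as a subset of integers. The sign of a permutation of an arbitrary finite set is not defined via inversions of $\left[n\right]$, so I would use the cycle-count characterization $\left(-1\right)^{\sigma}=\prod_{c}\left(-1\right)^{\left|c\right|-1}$. This formula is manifestly multiplicative over the decomposition into the cycles of $\alpha$ and $\beta$, and it agrees with the conjugation-invariant sign used in Proposition \ref{prop.3g}. Everything else is a routine reindexing of a finite sum.
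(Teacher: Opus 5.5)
Your proposal is correct and follows the paper's proof essentially step for step: expand the product over cycles as a choice of $f$ or $g$ for each cycle, reindex by $\sigma$-invariant splittings $U\sqcup V=[p]$ (your condition $\sigma(U)=U$ is equivalent to the paper's $\sigma(U)\subseteq U$ and $\sigma(V)\subseteq V$), factor the inner sum over $S_U\times S_V$, and identify the two factors as $f_U(a)$ and $g_V(a)$ via Proposition \ref{prop.3g}. The only difference is cosmetic: you justify $(-1)^{\sigma}=(-1)^{\alpha}(-1)^{\beta}$ through the cycle-count formula for the sign, whereas the paper factors into transpositions, and both are valid.
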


\begin{proof}
[Proof of Lemma \ref{lem.0} (sketched).]From \eqref{eq.darij1.1} (applied to
$f+g$ and $p$ instead of $f$ and $n$), we have%
\begin{align}
&  \left(  f+g\right)  _{p}\left(  a_{1},a_{2},\ldots,a_{p}\right) \nonumber\\
&  =\sum_{\sigma\in S_{p}}\left(  -1\right)  ^{\sigma}\prod
_{\substack{c=\left(  i_{1},i_{2},\ldots,i_{k}\right)  \\\text{is a cycle of
}\sigma}}\ \ \underbrace{\left(  f+g\right)  \left(  a_{i_{1}}a_{i_{2}}\cdots
a_{i_{k}}\right)  }_{=f\left(  a_{i_{1}}a_{i_{2}}\cdots a_{i_{k}}\right)
+g\left(  a_{i_{1}}a_{i_{2}}\cdots a_{i_{k}}\right)  }\nonumber\\
&  =\sum_{\sigma\in S_{p}}\left(  -1\right)  ^{\sigma}\prod
_{\substack{c=\left(  i_{1},i_{2},\ldots,i_{k}\right)  \\\text{is a cycle of
}\sigma}}\left(  f\left(  a_{i_{1}}a_{i_{2}}\cdots a_{i_{k}}\right)  +g\left(
a_{i_{1}}a_{i_{2}}\cdots a_{i_{k}}\right)  \right)
.\ \ \ \ \ \ \ \ \ \ \label{pf.lem.0.1}%
\end{align}

Now, fix a permutation $\sigma\in S_{p}$. If we expand the product%
\begin{equation}
\prod_{\substack{c=\left(  i_{1},i_{2},\ldots,i_{k}\right)  \\\text{is a cycle
of }\sigma}}\left(  f\left(  a_{i_{1}}a_{i_{2}}\cdots a_{i_{k}}\right)
+g\left(  a_{i_{1}}a_{i_{2}}\cdots a_{i_{k}}\right)  \right)  ,
\label{pf.lem.0.prod}%
\end{equation}
then we obtain a sum over all ways to choose, for each cycle $c=\left(
i_{1},i_{2},\ldots,i_{k}\right)  $ of $\sigma$, one of the two addends of the
sum $f\left(  a_{i_{1}}a_{i_{2}}\cdots a_{i_{k}}\right)  +g\left(  a_{i_{1}%
}a_{i_{2}}\cdots a_{i_{k}}\right)  $. Such \textquotedblleft ways to
choose\textquotedblright\ can be viewed as colorings of the cycles of $\sigma
$, in which each cycle is either colored red (meaning that the $f\left(
a_{i_{1}}a_{i_{2}}\cdots a_{i_{k}}\right)  $ addend is chosen) or colored blue
(meaning that the $g\left(  a_{i_{1}}a_{i_{2}}\cdots a_{i_{k}}\right)  $
addend is chosen). Thus, the expanded form of \eqref{pf.lem.0.prod} can be
written as follows:%
\begin{align*}
&  \prod_{\substack{c=\left(  i_{1},i_{2},\ldots,i_{k}\right)  \\\text{is a
cycle of }\sigma}}\left(  f\left(  a_{i_{1}}a_{i_{2}}\cdots a_{i_{k}}\right)
+g\left(  a_{i_{1}}a_{i_{2}}\cdots a_{i_{k}}\right)  \right) \\
&  =\sum_{\substack{\text{coloring of all cycles of }\sigma\\\text{in red and
blue}}}\ \ \prod_{\substack{c=\left(  i_{1},i_{2},\ldots,i_{k}\right)
\\\text{is a cycle of }\sigma}}%
\begin{cases}
f\left(  a_{i_{1}}a_{i_{2}}\cdots a_{i_{k}}\right)  , & \text{if }c\text{ is
red};\\
g\left(  a_{i_{1}}a_{i_{2}}\cdots a_{i_{k}}\right)  , & \text{if }c\text{ is
blue.}%
\end{cases}
\end{align*}

Equivalently, instead of coloring cycles, we can just as well color the
\textbf{elements} of these cycles red and blue (viz., all elements of all red
cycles are colored red, while all elements of all blue cycles are colored
blue). These colorings are not arbitrary, but must have the property that each
cycle is either completely red (i.e., all its elements are red) or completely
blue (i.e., all its elements are blue); in other words, they must have the
property that $\sigma$ sends red elements to red elements and blue elements to
blue elements. Thus, the above expanded form of \eqref{pf.lem.0.prod} can be
rewritten as follows:%
\begin{align*}
&  \prod_{\substack{c=\left(  i_{1},i_{2},\ldots,i_{k}\right)  \\\text{is a
cycle of }\sigma}}\left(  f\left(  a_{i_{1}}a_{i_{2}}\cdots a_{i_{k}}\right)
+g\left(  a_{i_{1}}a_{i_{2}}\cdots a_{i_{k}}\right)  \right) \\
&  =\sum_{\substack{\text{coloring of all elements of }\left[  p\right]
\\\text{in red and blue;}\\\sigma\text{ sends red elements to red
elements}\\\text{and blue elements to blue elements}}}\ \ \prod
_{\substack{c=\left(  i_{1},i_{2},\ldots,i_{k}\right)  \\\text{is a cycle of
}\sigma}}%
\begin{cases}
f\left(  a_{i_{1}}a_{i_{2}}\cdots a_{i_{k}}\right)  , & \text{if }c\text{ is
red};\\
g\left(  a_{i_{1}}a_{i_{2}}\cdots a_{i_{k}}\right)  , & \text{if }c\text{ is
blue}%
\end{cases}
\end{align*}
(where \textquotedblleft$c$ is red\textquotedblright\ means that all elements
of $c$ are red, and likewise for \textquotedblleft blue\textquotedblright).

Of course, a coloring of all elements of $\left[  p\right]  $ in red and blue
is the same thing as a decomposition of $\left[  p\right]  $ into two disjoint
subsets $U$ and $V$ (where $U$ is the set of all red elements and $V$ is the
set of all blue elements); in other words, it is a choice of two sets $U$ and
$V$ such that $U\sqcup V=\left[  p\right]  $. Moreover, the condition
\textquotedblleft$\sigma$ sends red elements to red elements\textquotedblright%
\ is simply saying that $\sigma\left(  U\right)  \subseteq U$, whereas the
condition \textquotedblleft$\sigma$ sends blue elements to blue
elements\textquotedblright\ is saying that $\sigma\left(  V\right)  \subseteq
V$. Hence, our above expanded form of \eqref{pf.lem.0.prod} can be rewritten
as follows:%
\begin{align}
&  \prod_{\substack{c=\left(  i_{1},i_{2},\ldots,i_{k}\right)  \\\text{is a
cycle of }\sigma}}\left(  f\left(  a_{i_{1}}a_{i_{2}}\cdots a_{i_{k}}\right)
+g\left(  a_{i_{1}}a_{i_{2}}\cdots a_{i_{k}}\right)  \right) \nonumber\\
&  =\sum_{\substack{U\sqcup V=\left[  p\right]  ;\\\sigma\left(  U\right)
\subseteq U;\\\sigma\left(  V\right)  \subseteq V}}\ \ \prod
_{\substack{c=\left(  i_{1},i_{2},\ldots,i_{k}\right)  \\\text{is a cycle of
}\sigma}}%
\begin{cases}
f\left(  a_{i_{1}}a_{i_{2}}\cdots a_{i_{k}}\right)  , & \text{if }c\subseteq
U;\\
g\left(  a_{i_{1}}a_{i_{2}}\cdots a_{i_{k}}\right)  , & \text{if }c\subseteq V
\end{cases}
\ \ \ \ \ \ \ \ \ \ \label{pf.lem.0.prod4}%
\end{align}
(where \textquotedblleft$c\subseteq U$\textquotedblright\ means that all
elements of $c$ belong to $U$, and likewise for \textquotedblleft$c\subseteq
V$\textquotedblright).

Forget that we fixed $\sigma$. We thus have proved \eqref{pf.lem.0.prod4} for
each $\sigma\in S_{p}$. Now, \eqref{pf.lem.0.1} becomes%
\begin{align}
&  \left(  f+g\right)  _{p}\left(  a_{1},a_{2},\ldots,a_{p}\right) \nonumber\\
&  =\sum_{\sigma\in S_{p}}\left(  -1\right)  ^{\sigma}\prod
_{\substack{c=\left(  i_{1},i_{2},\ldots,i_{k}\right)  \\\text{is a cycle of
}\sigma}}\left(  f\left(  a_{i_{1}}a_{i_{2}}\cdots a_{i_{k}}\right)  +g\left(
a_{i_{1}}a_{i_{2}}\cdots a_{i_{k}}\right)  \right) \nonumber\\
&  =\sum_{\sigma\in S_{p}}\left(  -1\right)  ^{\sigma}\sum_{\substack{U\sqcup
V=\left[  p\right]  ;\\\sigma\left(  U\right)  \subseteq U;\\\sigma\left(
V\right)  \subseteq V}}\ \ \prod_{\substack{c=\left(  i_{1},i_{2},\ldots
,i_{k}\right)  \\\text{is a cycle of }\sigma}}%
\begin{cases}
f\left(  a_{i_{1}}a_{i_{2}}\cdots a_{i_{k}}\right)  , & \text{if }c\subseteq
U;\\
g\left(  a_{i_{1}}a_{i_{2}}\cdots a_{i_{k}}\right)  , & \text{if }c\subseteq V
\end{cases}
\nonumber\\
&  \qquad\qquad\left(  \text{by \eqref{pf.lem.0.prod4}}\right) \nonumber\\
&  =\sum_{U\sqcup V=\left[  p\right]  }\ \ \sum_{\substack{\sigma\in
S_{p};\\\sigma\left(  U\right)  \subseteq U;\\\sigma\left(  V\right)
\subseteq V}}\left(  -1\right)  ^{\sigma}\prod_{\substack{c=\left(
i_{1},i_{2},\ldots,i_{k}\right)  \\\text{is a cycle of }\sigma}}%
\begin{cases}
f\left(  a_{i_{1}}a_{i_{2}}\cdots a_{i_{k}}\right)  , & \text{if }c\subseteq
U;\\
g\left(  a_{i_{1}}a_{i_{2}}\cdots a_{i_{k}}\right)  , & \text{if }c\subseteq V
\end{cases}
\ \ \ \ \ \ \ \ \ \ \label{pf.lem.0.3}%
\end{align}
(here, we have interchanged the two summation signs).

Now, fix a decomposition $U\sqcup V=\left[  p\right]  $ of the set $\left[
p\right]  $ into two disjoint subsets $U$ and $V$. Then, any permutation
$\sigma\in S_{p}$ satisfying $\sigma\left(  U\right)  \subseteq U$ and
$\sigma\left(  V\right)  \subseteq V$ can be \textquotedblleft
decomposed\textquotedblright\ into a pair $\left(  \alpha,\beta\right)  $
consisting of a permutation $\alpha:=\sigma\mid_{U}$ of $U$ and a permutation
$\beta:=\sigma\mid_{V}$ of $V$. To put it more formally: There is a bijection%
\begin{align*}
&  \text{from }\left\{  \text{permutations }\sigma\in S_{p}\text{ satisfying
}\sigma\left(  U\right)  \subseteq U\text{ and }\sigma\left(  V\right)
\subseteq V\right\} \\
&  \text{to }S_{U}\times S_{V}%
\end{align*}
(where $S_{X}$ denotes the group of permutations of any set $X$) that sends
each $\sigma$ to $\left(  \alpha,\beta\right)  :=\left(  \sigma\mid
_{U},\ \sigma\mid_{V}\right)  $. Moreover, the cycles of the former
permutation $\sigma$ are simply the cycles of the two \textquotedblleft
component\textquotedblright\ permutations $\alpha$ and $\beta$, and it is easy
to see that the sign of $\sigma$ is given by%
\[
\left(  -1\right)  ^{\sigma}=\left(  -1\right)  ^{\alpha}\left(  -1\right)
^{\beta}%
\]
(this follows, e.g., by writing $\alpha$ and $\beta$ as products of
transpositions). Hence,
\begin{align}
&  \sum_{\substack{\sigma\in S_{p};\\\sigma\left(  U\right)  \subseteq
U;\\\sigma\left(  V\right)  \subseteq V}}\left(  -1\right)  ^{\sigma}%
\prod_{\substack{c=\left(  i_{1},i_{2},\ldots,i_{k}\right)  \\\text{is a cycle
of }\sigma}}%
\begin{cases}
f\left(  a_{i_{1}}a_{i_{2}}\cdots a_{i_{k}}\right)  , & \text{if }c\subseteq
U;\\
g\left(  a_{i_{1}}a_{i_{2}}\cdots a_{i_{k}}\right)  , & \text{if }c\subseteq V
\end{cases}
\nonumber\\
&  =\sum_{\left(  \alpha,\beta\right)  \in S_{U}\times S_{V}}\left(
-1\right)  ^{\alpha}\left(  -1\right)  ^{\beta}\prod_{\substack{c=\left(
i_{1},i_{2},\ldots,i_{k}\right)  \\\text{is a cycle of }\alpha\text{ or of
}\beta}}%
\begin{cases}
f\left(  a_{i_{1}}a_{i_{2}}\cdots a_{i_{k}}\right)  , & \text{if }c\subseteq
U;\\
g\left(  a_{i_{1}}a_{i_{2}}\cdots a_{i_{k}}\right)  , & \text{if }c\subseteq V
\end{cases}
\nonumber\\
&  =\sum_{\left(  \alpha,\beta\right)  \in S_{U}\times S_{V}}\left(
-1\right)  ^{\alpha}\left(  -1\right)  ^{\beta}\left(  \prod
_{\substack{c=\left(  i_{1},i_{2},\ldots,i_{k}\right)  \\\text{is a cycle of
}\alpha}}f\left(  a_{i_{1}}a_{i_{2}}\cdots a_{i_{k}}\right)  \right)
\nonumber\\
&  \qquad\left(  \prod_{\substack{c=\left(  i_{1},i_{2},\ldots,i_{k}\right)
\\\text{is a cycle of }\beta}}g\left(  a_{i_{1}}a_{i_{2}}\cdots a_{i_{k}%
}\right)  \right) \nonumber\\
&  =\left(  \sum_{\alpha\in S_{U}}\left(  -1\right)  ^{\alpha}\prod
\limits_{\substack{c=\left(  i_{1},i_{2},\ldots,i_{k}\right)  \\\text{is a
cycle of }\alpha}}f\left(  a_{i_{1}}a_{i_{2}}\cdots a_{i_{k}}\right)  \right)
\nonumber\\
&  \qquad\left(  \sum_{\beta\in S_{V}}\left(  -1\right)  ^{\beta}%
\prod\limits_{\substack{c=\left(  i_{1},i_{2},\ldots,i_{k}\right)  \\\text{is
a cycle of }\beta}}g\left(  a_{i_{1}}a_{i_{2}}\cdots a_{i_{k}}\right)
\right)  . \label{pf.lem.0.5}%
\end{align}

However, if we write the set $U$ in the form $U=\left\{  u_{1},u_{2}%
,\ldots,u_{r}\right\}  $ (with $u_{1},u_{2},\ldots,u_{r}$ distinct), then%
\begin{align}
f_{U}\left(  a\right)   &  =f_{r}\left(  a_{u_{1}},a_{u_{2}},\ldots,a_{u_{r}%
}\right)  \qquad\left(  \text{by the definition of }f_{U}\right) \nonumber\\
&  =\sum_{\alpha\in S_{U}}\left(  -1\right)  ^{\alpha}\prod
\limits_{\substack{c=\left(  i_{1},i_{2},\ldots,i_{k}\right)  \\\text{is a
cycle of }\alpha}}f\left(  a_{i_{1}}a_{i_{2}}\cdots a_{i_{k}}\right)
\label{pf.lem.0.7}%
\end{align}
(by Proposition \ref{prop.3g}, applied to $r$ instead of $n$). Thus, we have
shown that%
\[
\sum_{\alpha\in S_{U}}\left(  -1\right)  ^{\alpha}\prod
\limits_{\substack{c=\left(  i_{1},i_{2},\ldots,i_{k}\right)  \\\text{is a
cycle of }\alpha}}f\left(  a_{i_{1}}a_{i_{2}}\cdots a_{i_{k}}\right)
=f_{U}\left(  a\right)  .
\]
Similarly,%
\[
\sum_{\beta\in S_{V}}\left(  -1\right)  ^{\beta}\prod
\limits_{\substack{c=\left(  i_{1},i_{2},\ldots,i_{k}\right)  \\\text{is a
cycle of }\beta}}g\left(  a_{i_{1}}a_{i_{2}}\cdots a_{i_{k}}\right)
=g_{V}\left(  a\right)  .
\]

Substituting these two equalities into \eqref{pf.lem.0.5}, we obtain%
\begin{align}
&  \sum_{\substack{\sigma\in S_{p};\\\sigma\left(  U\right)  \subseteq
U;\\\sigma\left(  V\right)  \subseteq V}}\left(  -1\right)  ^{\sigma}%
\prod_{\substack{c=\left(  i_{1},i_{2},\ldots,i_{k}\right)  \\\text{is a cycle
of }\sigma}}%
\begin{cases}
f\left(  a_{i_{1}}a_{i_{2}}\cdots a_{i_{k}}\right)  , & \text{if }c\subseteq
U;\\
g\left(  a_{i_{1}}a_{i_{2}}\cdots a_{i_{k}}\right)  , & \text{if }c\subseteq V
\end{cases}
\nonumber\\
&  =f_{U}\left(  a\right)  \cdot g_{V}\left(  a\right)  . \label{pf.lem.0.9}%
\end{align}

Now forget that we fixed the decomposition $U\sqcup V=\left[  p\right]  $.
Now, substituting \eqref{pf.lem.0.9} into \eqref{pf.lem.0.3}, we obtain%
\[
\left(  f+g\right)  _{p}\left(  a_{1},a_{2},\ldots,a_{p}\right)
=\sum\limits_{U\sqcup V=\left[  p\right]  }f_{U}\left(  a\right)  \cdot
g_{V}\left(  a\right)  .
\]
This proves Lemma \ref{lem.0}.
\end{proof}

\begin{proof}
[Proof of Theorem \ref{thm.1} (sketched).]The map $f$ is central (being an
$n$-homomorphism). Similarly, $g$ is central. Thus, it is easy to see that
$f+g$ is central. It remains to show that $\left(  f+g\right)  _{n+m+1}=0$.

Let $p:=n+m+1$. Let $a_{1},a_{2},\ldots,a_{p}\in A$. Then, Lemma \ref{lem.0}
yields%
\begin{equation}
\left(  f+g\right)  _{p}\left(  a_{1},a_{2},\ldots,a_{p}\right)
=\sum\limits_{U\sqcup V=\left[  p\right]  }f_{U}\left(  a\right)  \cdot
g_{V}\left(  a\right)  , \label{pf.thm.1.1}%
\end{equation}
where the notations are as defined in Lemma \ref{lem.0}.

Now, fix any decomposition $U\sqcup V=\left[  p\right]  $ of $\left[
p\right]  $ into two disjoint subsets $U$ and $V$. We shall show that%
\begin{equation}
f_{U}\left(  a\right)  \cdot g_{V}\left(  a\right)  =0. \label{pf.thm.1.2}%
\end{equation}
[\textit{Proof:} From $U\sqcup V=\left[  p\right]  $, we obtain%
\[
\left\vert U\right\vert +\left\vert V\right\vert =\left\vert U\sqcup
V\right\vert =\left\vert \left[  p\right]  \right\vert =p=n+m+1>n+m.
\]
Hence, we must have $\left\vert U\right\vert >n$ or $\left\vert V\right\vert
>m$ (since otherwise, we would have both $\left\vert U\right\vert \leq n$ and
$\left\vert V\right\vert \leq m$, and therefore we could add these two
inequalities together and obtain $\left\vert U\right\vert +\left\vert
V\right\vert \leq n+m$, which would contradict $\left\vert U\right\vert
+\left\vert V\right\vert >n+m$). We WLOG assume that $\left\vert U\right\vert
>n$ (since the $\left\vert V\right\vert >m$ case is analogous). Thus,
$\left\vert U\right\vert \geq n+1$. But $f_{n+1}=0$ (since $f$ is an
$n$-homomorphism). Hence, Lemma \ref{lem.5} (applied to $\left\vert
U\right\vert $ and $n+1$ instead of $n$ and $m$) yields $f_{\left\vert
U\right\vert }=0$ (since $\left\vert U\right\vert \geq n+1$). Now, let us
write the set $U$ in the form $U=\left\{  u_{1},u_{2},\ldots,u_{r}\right\}  $
(with $u_{1},u_{2},\ldots,u_{r}$ distinct). Then, $r=\left\vert U\right\vert $
and therefore $f_{r}=f_{\left\vert U\right\vert }=0$. But the definition of
$f_{U}$ (see Lemma \ref{lem.0}) yields%
\[
f_{U}\left(  a\right)  =\underbrace{f_{r}}_{=0}\left(  a_{u_{1}},a_{u_{2}%
},\ldots,a_{u_{r}}\right)  =0.
\]
Hence, $f_{U}\left(  a\right)  \cdot g_{V}\left(  a\right)  =0\cdot
g_{V}\left(  a\right)  =0$. This proves \eqref{pf.thm.1.2}.] \medskip

Forget that we fixed the decomposition $U\sqcup V=\left[  p\right]  $. We have
thus shown that \eqref{pf.thm.1.2} holds for each such decomposition. In other
words, all addends on the right hand side of \eqref{pf.thm.1.1} are $0$.
Hence, the whole right hand side is $0$, and so we can rewrite
\eqref{pf.thm.1.1} as%
\[
\left(  f+g\right)  _{p}\left(  a_{1},a_{2},\ldots,a_{p}\right)  =0.
\]
Since $a_{1},a_{2},\ldots,a_{p}$ were chosen arbitrarily, this proves that
$\left(  f+g\right)  _{p}=0$. In other words, $\left(  f+g\right)  _{n+m+1}=0$
(since $p=n+m+1$). This completes the proof of Theorem \ref{thm.1}.
\end{proof}

\subsection{The composition formula}

Next, we introduce some notations. A \emph{set partition} (henceforth just
\emph{partition}) of a finite set $J$ means a set $\left\{  J_{1},J_{2}%
,\ldots,J_{k}\right\}  $ of disjoint nonempty subsets of $J$ whose union is
$J_{1}\cup J_{2}\cup\cdots\cup J_{k}=J$. These subsets $J_{1},J_{2}%
,\ldots,J_{k}$ are called the \emph{blocks} of the partition. For instance,
the five partitions of $\left[  3\right]  $ are\footnote{Recall that for any
integer $r\geq0$, we let $\left[  r\right]  $ denote the set $\left\{
1,2,\ldots,r\right\}  $.}
\begin{align*}
&  \left\{  \left\{  1,2,3\right\}  \right\}  \qquad\left(  \text{with
}1\text{ block}\right)  \qquad\text{and}\\
&  \left\{  \left\{  1,2\right\}  ,\left\{  3\right\}  \right\}  \text{ and
}\left\{  \left\{  1,3\right\}  ,\left\{  2\right\}  \right\}  \text{ and
}\left\{  \left\{  2,3\right\}  ,\left\{  1\right\}  \right\} \\
&  \qquad\qquad\qquad\text{(with }2\text{ blocks each)}\qquad\text{and}\\
&  \left\{  \left\{  1\right\}  ,\left\{  2\right\}  ,\left\{  3\right\}
\right\}  \qquad\left(  \text{with }3\text{ blocks}\right)  .
\end{align*}
Note that $\left\{  \left\{  1,3\right\}  ,\left\{  2\right\}  \right\}  $ and
$\left\{  \left\{  2\right\}  ,\left\{  3,1\right\}  \right\}  $ are the same
partition. We let $\Pi_{J}$ denote the set of all partitions of the finite set
$J$. Note that the empty set $\varnothing$ has a unique partition, which has
$0$ blocks; that is, $\Pi_{\varnothing}=\left\{  \varnothing\right\}  $.

Let us agree to always write partitions with their blocks distinct: e.g., when
we say \textquotedblleft$\left\{  P_{1},P_{2},\ldots,P_{k}\right\}  \in\Pi
_{J}$\textquotedblright, we shall understand that $P_{1},P_{2},\ldots,P_{k}$
are distinct.

The following formula (discovered by GPT-5.4) will be crucial to our proof of
Theorem \ref{thm.6}:

\begin{theorem}
\label{thm.4}Let $A$, $B$ and $C$ be three rings, with $B$ and $C$
commutative. Let $g:A\rightarrow B$ and $f:B\rightarrow C$ be two central
$\mathbb{Z}$-linear maps. Let $h:=f\circ g:A\rightarrow C$. Let $a_{1}%
,a_{2},\ldots,a_{n}\in A$. For any subset $P=\left\{  p_{1},p_{2},\ldots
,p_{r}\right\}  $ of $\left[  n\right]  $ (with $p_{1},p_{2},\ldots,p_{r}$
distinct), we set
\begin{equation}
b_{P}:=g_{r}\left(  a_{p_{1}},a_{p_{2}},\ldots,a_{p_{r}}\right)  .
\label{eq.darij2.t4.bP}%
\end{equation}
(This is well-defined, i.e., does not depend on the order in which we label
the elements of $P$ as $p_{1},p_{2},\ldots,p_{r}$, because Proposition
\ref{prop.3} shows that the map $g_{r}$ is symmetric.) Then,
\[
h_{n}\left(  a_{1},a_{2},\ldots,a_{n}\right)  =\sum_{\pi=\left\{  P_{1}%
,P_{2},\ldots,P_{k}\right\}  \in\Pi_{\left[  n\right]  }}f_{k}\left(
b_{P_{1}},b_{P_{2}},\ldots,b_{P_{k}}\right)  .
\]
(The expression $f_{k}\left(  b_{P_{1}},b_{P_{2}},\ldots,b_{P_{k}}\right)  $
here is well-defined, i.e., does not depend on the order in which we label the
blocks of $\pi$ as $P_{1},P_{2},\ldots,P_{k}$, because Proposition
\ref{prop.3} shows that the map $f_{k}$ is symmetric.)
\end{theorem}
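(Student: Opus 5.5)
The plan is to expand both sides into sums over permutations of $\left[n\right]$ decorated with extra data, and then show that the extra data on the right hand side cancels by a Möbius-type identity. The left hand side is, by \eqref{eq.darij1.1},
\[
h_{n}\left(a_{1},\ldots,a_{n}\right)=\sum_{\sigma\in S_{n}}\left(-1\right)^{\sigma}\prod_{\substack{c=\left(i_{1},\ldots,i_{k}\right)\\\text{is a cycle of }\sigma}}f\left(g\left(a_{i_{1}}\cdots a_{i_{k}}\right)\right).
\]
For the right hand side, I expand each $b_{P}$ via Proposition \ref{prop.3g} as a signed sum over permutations $\alpha_{P}\in S_{P}$. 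I also expand $f_{k}\left(b_{P_{1}},\ldots,b_{P_{k}}\right)$ via \eqref{eq.darij1.1} as a signed sum over permutations $\tau$ of the block set $\pi$; this is allowed by Proposition \ref{prop.3g}, after indexing the blocks. Each $\tau$-cycle $C$ contributes $f\bigl(\prod_{P\in C}b_{P}\bigr)$, and the product is just a product in the commutative ring $B$.

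Next I multiply out. Since $f$ is $\mathbb{Z}$-linear, the right hand side becomes a sum over triples $\left(\pi,\left(\alpha_{P}\right)_{P\in\pi},\tau\right)$. The sign of such a term is $\left(-1\right)^{\tau}\prod_{P}\left(-1\right)^{\alpha_{P}}$. Its value is the product, over $\tau$-cycles $C$, of $f$ applied to the product of $g\left(a_{i_{1}}\cdots a_{i_{k}}\right)$ over all cycles $\left(i_{1},\ldots,i_{k}\right)$ of all $\alpha_{P}$ with $P\in C$. Now I reindex. The family $\left(\alpha_{P}\right)$ glues to a single $\sigma\in S_{n}$ whose cycles refine $\pi$, and $\prod_{P}\left(-1\right)^{\alpha_{P}}=\left(-1\right)^{\sigma}$, exactly as in the proof of Lemma \ref{lem.0}. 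Conversely, a pair $\left(\sigma,\pi\right)$ with every block of $\pi$ a union of cycles of $\sigma$ determines the $\alpha_{P}$. So fix $\sigma$ and let $Z$ be its set of cycles. The data $\left(\pi,\tau\right)$ then amounts to the following: a set partition $\mathcal{Q}$ of $Z$ (the grouping by $\tau$-cycles), and, for each group $G\in\mathcal{Q}$, a set partition of $G$ into some number $m_{G}$ of blocks together with a cyclic order on those blocks. The term's value depends only on $\sigma$ and $\mathcal{Q}$: it is $\prod_{G\in\mathcal{Q}}f\bigl(\prod_{c\in G}g\left(a_{c}\right)\bigr)$, where $a_{c}$ denotes the cyclic product. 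The sign is $\left(-1\right)^{\sigma}\prod_{G}\left(-1\right)^{m_{G}-1}$, because a permutation consisting of cycles of lengths $m_{G}$ has sign $\prod_{G}\left(-1\right)^{m_{G}-1}$.

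Summing over the inner data, each group $G$ with $\left\vert G\right\vert =j$ contributes the scalar factor
\[
w_{j}:=\sum_{m=1}^{j}\left(-1\right)^{m-1}\left(m-1\right)!\,S\left(j,m\right),
\]
where $S\left(j,m\right)$ is the number of set partitions of a $j$-element set into $m$ blocks and $\left(m-1\right)!$ counts the cyclic orders on $m$ blocks. The key identity, and the main obstacle, is $w_{j}=\left[j=1\right]$. I would prove it by induction using $S\left(j,m\right)=mS\left(j-1,m\right)+S\left(j-1,m-1\right)$. This gives
\[
w_{j}=\sum_{m}\left(-1\right)^{m-1}m!\,S\left(j-1,m\right)+\sum_{m}\left(-1\right)^{m-1}\left(m-1\right)!\,S\left(j-1,m-1\right),
\]
and the two sums cancel after the shift $m\mapsto m+1$ in the second sum, for $j\geq2$. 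Alternatively, $w_{j}$ is the Möbius function $\mu\left(\hat{0},\hat{1}\right)$ of the partition lattice summed appropriately, or the coefficient of $x^{j}/j!$ in $\log\left(1+\left(e^{x}-1\right)\right)=x$.

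Given $w_{j}=\left[j=1\right]$, only the grouping $\mathcal{Q}$ into singletons survives. Its contribution is $\left(-1\right)^{\sigma}\prod_{c\in Z}f\left(g\left(a_{c}\right)\right)$, which is exactly the $\sigma$-term of $h_{n}$. Summing over $\sigma$ finishes the proof. The bookkeeping in the bijection $\left(\pi,\left(\alpha_{P}\right),\tau\right)\leftrightarrow\left(\sigma,\mathcal{Q},\text{inner data}\right)$ is routine but needs care. In particular, one must check that the grouping of cycles of $\sigma$ into $\tau$-cycles is well defined, and that the count of inner data is indeed $S\left(j,m\right)\left(m-1\right)!$.
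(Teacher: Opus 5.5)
Your proof is correct, and it takes a genuinely different route from the paper's.

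The paper argues by induction on $n$ and never opens up the permutation sums. It splits $\Pi_{[n]}$ along the deletion map $\operatorname*{del}:\Pi_{[n]}\to\Pi_{[n-1]}$. It applies the recursion \eqref{eq.darij1.2} for $g$ blockwise to get $b_{P\cup\{n\}}=b_Pb_{\{n\}}-\sum_{i\in P}b_P^{(i)}$. It then applies \eqref{eq.darij1.2} for $f$ to merge the ``new block $\{n\}$'' term with the ``insert $n$ into $Q_j$'' terms. Finally it matches the result against \eqref{eq.darij1.2} for $h$ via the induction hypothesis.

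You instead expand both layers of Frobenius maps fully into cycles. You regroup by $\sigma$ and by the coarsening $\mathcal{Q}$ of its cycle set, and reduce everything to the scalar identity $w_j=\sum_{\rho\in\Pi_j}(-1)^{|\rho|-1}(|\rho|-1)!=[j=1]$. Your Stirling-recurrence computation is right. It is a direct cancellation rather than an induction: it only needs $S(j-1,j)=0$ and $S(j-1,0)=0$ for $j\geq 2$. The cleanest way to state it is $w_j=\sum_{\rho\in\Pi_j}\mu(\rho,\hat{1})=[\hat{0}=\hat{1}]$ in the partition lattice.

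Your argument is exactly the ``inclusion/exclusion argument using cycles of permutations'' that the author says he would welcome. It is non-inductive and does not need Proposition \ref{prop.2}. It also shows why the formula holds: the net contribution comes only from $\pi$ equal to the orbit partition of $\sigma$ with $\tau=\operatorname{id}$, which makes the theorem look like an exponential-formula (compositional inverse) phenomenon.

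In exchange, the paper's induction avoids two pieces of bookkeeping that a full write-up of your approach must spell out, even though you rightly call them routine:
\begin{itemize}
\item the nested bijection $(\pi,(\alpha_P)_{P\in\pi},\tau)\leftrightarrow(\sigma,\mathcal{Q},(\rho_G,\text{cyclic order on }\rho_G)_{G\in\mathcal{Q}})$;
\item the sign identities, namely multiplicativity of signs under gluing and $(-1)^{\tau}=\prod_G(-1)^{m_G-1}$.
\end{itemize}
Also, Proposition \ref{prop.3g} is stated only for sets of integers. For the block set $\pi$ you should apply \eqref{eq.darij1.1} to a labeling $P_1,\ldots,P_k$ and then transport to permutations of $\pi$, as you indicate.
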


\begin{example}
Let us set $n=3$ in Theorem \ref{thm.4}. Then, the theorem says that%
\begin{align*}
&  h_{3}\left(  a_{1},a_{2},a_{3}\right) \\
&  =\sum_{\pi=\left\{  P_{1},P_{2},\ldots,P_{k}\right\}  \in\Pi_{\left[
3\right]  }}f_{k}\left(  b_{P_{1}},b_{P_{2}},\ldots,b_{P_{k}}\right) \\
&  =f_{1}\left(  b_{\left\{  1,2,3\right\}  }\right)  +f_{2}\left(
b_{\left\{  1,2\right\}  },b_{\left\{  3\right\}  }\right)  +f_{2}\left(
b_{\left\{  1,3\right\}  },b_{\left\{  2\right\}  }\right) \\
&  \ \ \ \ \ \ \ \ \ \ +f_{2}\left(  b_{\left\{  2,3\right\}  },b_{\left\{
1\right\}  }\right)  +f_{3}\left(  b_{\left\{  1\right\}  },b_{\left\{
2\right\}  },b_{\left\{  3\right\}  }\right) \\
&  =f_{1}\left(  g_{3}\left(  a_{1},a_{2},a_{3}\right)  \right)  +f_{2}\left(
g_{2}\left(  a_{1},a_{2}\right)  ,g_{1}\left(  a_{3}\right)  \right)
+f_{2}\left(  g_{2}\left(  a_{1},a_{3}\right)  ,g_{1}\left(  a_{2}\right)
\right) \\
&  \ \ \ \ \ \ \ \ \ \ +f_{2}\left(  g_{2}\left(  a_{2},a_{3}\right)
,g_{1}\left(  a_{1}\right)  \right)  +f_{3}\left(  g_{1}\left(  a_{1}\right)
,g_{1}\left(  a_{2}\right)  ,g_{1}\left(  a_{3}\right)  \right)  .
\end{align*}

\end{example}

\begin{proof}
[Proof of Theorem \ref{thm.4}.]I would welcome an inclusion/exclusion argument
using cycles of permutations, but GPT-5.4 suggests an induction proof instead,
and that shall do.

We induct on $n$.

\textit{Base case:} For $n=0$, the claim of Theorem \ref{thm.4} is saying that
$h_{0}\left(  {}\right)  =f_{0}\left(  {}\right)  $ (since $\Pi_{\left[
0\right]  }=\Pi_{\varnothing}=\left\{  \varnothing\right\}  $), which is clear
because both sides equal $1$.

Strictly speaking, this is enough to complete the base case, but let us also
check the $n=1$ case.

For $n=1$, the claim of Theorem \ref{thm.4} is saying that $h_{1}\left(
a_{1}\right)  =f_{1}\left(  b_{\left\{  1\right\}  }\right)  $. Since
$f_{1}=f$ and $h_{1}=h$ and $b_{\left\{  1\right\}  }=g_{1}\left(
a_{1}\right)  =g\left(  a_{1}\right)  $, this boils down to $h\left(
a_{1}\right)  =f\left(  g\left(  a_{1}\right)  \right)  $, which follows from
$h=f\circ g$. Thus, the base case is proved.

\textit{Induction step:} Let $n\geq2$. Assume (as induction hypothesis) that
Theorem \ref{thm.4} holds for $n-1$ instead of $n$. We must now prove it for
$n$.

The induction hypothesis yields
\begin{equation}
h_{n-1}\left(  a_{1},a_{2},\ldots,a_{n-1}\right)  =\sum_{\pi=\left\{
P_{1},P_{2},\ldots,P_{k}\right\}  \in\Pi_{\left[  n-1\right]  }}f_{k}\left(
b_{P_{1}},b_{P_{2}},\ldots,b_{P_{k}}\right)
.\ \ \ \ \ \ \ \ \ \ \label{eq.darij2.t4.pf.IH1}%
\end{equation}
Moreover, for each $i\in\left[  n-1\right]  $, we define $n-1$ elements
$a_{1}^{(i)},a_{2}^{(i)},\ldots,a_{n-1}^{(i)}$ of $A$ by
\begin{align}
&  \left(  a_{1}^{(i)},a_{2}^{(i)},\ldots,a_{n-1}^{(i)}\right) \nonumber\\
&  :=\left(  a_{1},a_{2},\ldots,a_{i-1},a_{i}a_{n},a_{i+1},a_{i+2}%
,\ldots,a_{n-1}\right)  \label{eq.darij2.t4.pf.a1i}%
\end{align}
(these are what we called $a_{1}^{\prime},a_{2}^{\prime},\ldots,a_{n-1}%
^{\prime}$ in \eqref{eq.darij2.p2.pf.5a}), and then the induction hypothesis
(applied to these $n-1$ elements) yields
\begin{equation}
h_{n-1}\left(  a_{1}^{(i)},a_{2}^{(i)},\ldots,a_{n-1}^{(i)}\right)  =\sum
_{\pi=\left\{  P_{1},P_{2},\ldots,P_{k}\right\}  \in\Pi_{\left[  n-1\right]
}}f_{k}\left(  b_{P_{1}}^{(i)},b_{P_{2}}^{(i)},\ldots,b_{P_{k}}^{(i)}\right)
,\ \ \ \ \ \ \ \ \ \ \label{eq.darij2.t4.pf.IH2}%
\end{equation}
where for any subset $P=\left\{  p_{1},p_{2},\ldots,p_{r}\right\}  $ of
$\left[  n-1\right]  $ (with $p_{1},p_{2},\ldots,p_{r}$ distinct), we set
\begin{equation}
b_{P}^{(i)}:=g_{r}\left(  a_{p_{1}}^{(i)},a_{p_{2}}^{(i)},\ldots,a_{p_{r}%
}^{(i)}\right)  . \label{eq.darij2.t4.pf.b1i}%
\end{equation}

Note that if $P$ is a subset of $\left[  n-1\right]  $ and if $i\in\left[
n-1\right]  $ is such that $i\notin P$, then
\begin{equation}
b_{P}=b_{P}^{(i)}. \label{eq.darij2.t4.pf.bP=bPi}%
\end{equation}
(Indeed, let $P$ be a subset of $\left[  n-1\right]  $, and let $i\in\left[
n-1\right]  $ be such that $i\notin P$. Then, writing $P$ as $P=\left\{
p_{1},p_{2},\ldots,p_{r}\right\}  $ (with $p_{1},p_{2},\ldots,p_{r}$
distinct), we see that each $k\in\left[  r\right]  $ satisfies $p_{k}\neq i$
(since $i\notin P=\left\{  p_{1},p_{2},\ldots,p_{r}\right\}  $) and therefore
$a_{p_{k}}^{(i)}=a_{p_{k}}$ (since \eqref{eq.darij2.t4.pf.a1i} shows that
$a_{j}^{(i)}=a_{j}$ for all $j\neq i$). Hence, the right hand side of
\eqref{eq.darij2.t4.pf.b1i} equals the right hand side of
\eqref{eq.darij2.t4.bP}. Therefore, the same holds for the left hand sides as
well. In other words, we have $b_{P}^{(i)}=b_{P}$. This proves \eqref{eq.darij2.t4.pf.bP=bPi}.)

Note also that \eqref{eq.darij2.t4.bP} yields $b_{\left\{  n\right\}  }%
=g_{1}\left(  a_{n}\right)  =g\left(  a_{n}\right)  $.

Next, we observe that each subset $P$ of $\left[  n-1\right]  $ satisfies
\begin{equation}
b_{P\cup\left\{  n\right\}  }=b_{P}b_{\left\{  n\right\}  }-\sum_{i\in P}%
b_{P}^{(i)}. \label{eq.darij2.t4.pf.bPn}%
\end{equation}
(Indeed, let $P$ be a subset of $\left[  n-1\right]  $. Write $P$ as
$P=\left\{  p_{1},p_{2},\ldots,p_{r}\right\}  $ (with $p_{1},p_{2}%
,\ldots,p_{r}$ distinct). Then $P\cup\left\{  n\right\}  =\left\{  p_{1}%
,p_{2},\ldots,p_{r},n\right\}  $ (with $p_{1},p_{2},\ldots,p_{r},n$ distinct
because $P\subseteq\left[  n-1\right]  $), hence
\begin{equation}
b_{P\cup\left\{  n\right\}  }=g_{r+1}\left(  a_{p_{1}},a_{p_{2}}%
,\ldots,a_{p_{r}},a_{n}\right)  \qquad\left(  \text{by
\eqref{eq.darij2.t4.bP}}\right)  . \label{eq.darij2.t4.pf.bPn.pf.0}%
\end{equation}
But each $i\in\left[  r\right]  $ satisfies
\begin{align}
b_{P}^{(p_{i})}  &  =g_{r}\underbrace{\left(  a_{p_{1}}^{(p_{i})},a_{p_{2}%
}^{(p_{i})},\ldots,a_{p_{r}}^{(p_{i})}\right)  }_{\substack{=\left(  a_{p_{1}%
},a_{p_{2}},\ldots,a_{p_{i-1}},a_{p_{i}}a_{n},a_{p_{i+1}},a_{p_{i+2}}%
,\ldots,a_{p_{r}}\right)  \\\text{(indeed, all }k\neq i\text{ satisfy }%
p_{k}\neq p_{i}\text{ (since }p_{1},p_{2},\ldots,p_{r}\text{ are
distinct)}\\\text{and thus }a_{p_{k}}^{(p_{i})}=a_{p_{k}}\text{ (by
\eqref{eq.darij2.t4.pf.a1i}),}\\\text{whereas }a_{p_{i}}^{(p_{i})}=a_{p_{i}%
}a_{n}\text{ (again by \eqref{eq.darij2.t4.pf.a1i}))}}}\qquad\left(  \text{by
\eqref{eq.darij2.t4.pf.b1i}}\right) \nonumber\\
&  =g_{r}\left(  a_{p_{1}},a_{p_{2}},\ldots,a_{p_{i-1}},a_{p_{i}}%
a_{n},a_{p_{i+1}},a_{p_{i+2}},\ldots,a_{p_{r}}\right)  .
\label{eq.darij2.t4.pf.bPn.pf.1}%
\end{align}
Now, the recursion \eqref{eq.darij1.2} (applied to $g$, $r+1$ and $\left(
a_{p_{1}},a_{p_{2}},\ldots,a_{p_{r}},a_{n}\right)  $ instead of $f$, $n$ and
$\left(  a_{1},a_{2},\ldots,a_{n}\right)  $) yields
\begin{align*}
&  g_{r+1}\left(  a_{p_{1}},a_{p_{2}},\ldots,a_{p_{r}},a_{n}\right) \\
&  =\underbrace{g\left(  a_{n}\right)  }_{=b_{\left\{  n\right\}  }%
}\underbrace{g_{r}\left(  a_{p_{1}},a_{p_{2}},\ldots,a_{p_{r}}\right)
}_{=b_{P}}\\
&  \qquad-\sum_{i=1}^{r}\underbrace{g_{r}\left(  a_{p_{1}},a_{p_{2}}%
,\ldots,a_{p_{i-1}},a_{p_{i}}a_{n},a_{p_{i+1}},a_{p_{i+2}},\ldots,a_{p_{r}%
}\right)  }_{\substack{=b_{P}^{(p_{i})}\\\text{(by
\eqref{eq.darij2.t4.pf.bPn.pf.1})}}}\\
&  =b_{\left\{  n\right\}  }b_{P}-\sum_{i=1}^{r}b_{P}^{(p_{i})}=b_{\left\{
n\right\}  }b_{P}-\sum_{j\in P}b_{P}^{(j)}%
\end{align*}
(here, we have substituted $j$ for $p_{i}$ in the sum, since $P=\left\{
p_{1},p_{2},\ldots,p_{r}\right\}  $ with $p_{1},p_{2},\ldots,p_{r}$ distinct).
Thus, we can rewrite \eqref{eq.darij2.t4.pf.bPn.pf.0} as
\[
b_{P\cup\left\{  n\right\}  }=\underbrace{b_{\left\{  n\right\}  }b_{P}%
}_{\substack{=b_{P}b_{\left\{  n\right\}  }\\\text{(since }B\text{
is}\\\text{commutative)}}}-\underbrace{\sum_{j\in P}b_{P}^{(j)}}_{=\sum_{i\in
P}b_{P}^{(i)}}=b_{P}b_{\left\{  n\right\}  }-\sum_{i\in P}b_{P}^{(i)}.
\]
This proves \eqref{eq.darij2.t4.pf.bPn}.)

Let $\operatorname*{del}:\Pi_{\left[  n\right]  }\rightarrow\Pi_{\left[
n-1\right]  }$ be the map that transforms each partition $\pi$ of $\left[
n\right]  $ into a partition of $\left[  n-1\right]  $ by deleting the element
$n$ from the block of $\pi$ that contains it (and deleting this block if it
becomes empty). For instance, for $n=4$, we have
\begin{align*}
\operatorname*{del}\left(  \left\{  \left\{  1,4\right\}  ,\left\{
2,3\right\}  \right\}  \right)   &  =\left\{  \left\{  1\right\}  ,\left\{
2,3\right\}  \right\}  \qquad\text{and}\\
\operatorname*{del}\left(  \left\{  \left\{  1\right\}  ,\left\{  2,3\right\}
,\left\{  4\right\}  \right\}  \right)   &  =\left\{  \left\{  1\right\}
,\left\{  2,3\right\}  \right\}  .
\end{align*}

Thus, we can split the sum on the right hand side of Theorem \ref{thm.4} as
follows:
\begin{align}
&  \sum_{\pi=\left\{  P_{1},P_{2},\ldots,P_{k}\right\}  \in\Pi_{\left[
n\right]  }}f_{k}\left(  b_{P_{1}},b_{P_{2}},\ldots,b_{P_{k}}\right)
\nonumber\\
&  =\sum_{\omega=\left\{  Q_{1},Q_{2},\ldots,Q_{\ell}\right\}  \in\Pi_{\left[
n-1\right]  }}\ \ \sum_{\substack{\pi=\left\{  P_{1},P_{2},\ldots
,P_{k}\right\}  \in\Pi_{\left[  n\right]  };\\\operatorname*{del}\left(
\pi\right)  =\omega}}f_{k}\left(  b_{P_{1}},b_{P_{2}},\ldots,b_{P_{k}}\right)
.\ \ \ \ \ \ \ \ \ \ \label{eq.darij2.t4.pf.3}%
\end{align}

Now, fix a partition $\omega=\left\{  Q_{1},Q_{2},\ldots,Q_{\ell}\right\}
\in\Pi_{\left[  n-1\right]  }$.

Which partitions $\pi\in\Pi_{\left[  n\right]  }$ satisfy $\operatorname*{del}%
\left(  \pi\right)  =\omega$ ? In other words, which partitions of $\left[
n\right]  $ become $\omega$ after we remove the element $n$ (and the block
that contains it, in case it becomes empty)? Stated this way, the question is
trivial: those partitions that can be obtained from $\omega$ either by adding
a new block $\left\{  n\right\}  $ or by inserting $n$ into one of the
existing blocks $Q_{1},Q_{2},\ldots,Q_{\ell}$. Thus, there are precisely
$\ell+1$ partitions $\pi\in\Pi_{\left[  n\right]  }$ that satisfy
$\operatorname*{del}\left(  \pi\right)  =\omega$: namely, the one partition
\[
\left\{  Q_{1},Q_{2},\ldots,Q_{\ell},\left\{  n\right\}  \right\}
\]
and the $\ell$ partitions
\[
\left\{  Q_{1},Q_{2},\ldots,Q_{j}\cup\left\{  n\right\}  ,\ldots,Q_{\ell
}\right\}  \qquad\text{for }j\in\left[  \ell\right]
\]
(the notation \textquotedblleft$Q_{1},Q_{2},\ldots,Q_{j}\cup\left\{
n\right\}  ,\ldots,Q_{\ell}$\textquotedblright\ means \textquotedblleft take
the list $Q_{1},Q_{2},\ldots,Q_{\ell}$ and replace its $j$-th entry by
$Q_{j}\cup\left\{  n\right\}  $\textquotedblright). Hence,
\begin{align}
&  \sum_{\substack{\pi=\left\{  P_{1},P_{2},\ldots,P_{k}\right\}  \in
\Pi_{\left[  n\right]  };\\\operatorname*{del}\left(  \pi\right)  =\omega
}}f_{k}\left(  b_{P_{1}},b_{P_{2}},\ldots,b_{P_{k}}\right) \nonumber\\
&  =f_{\ell+1}\left(  b_{Q_{1}},b_{Q_{2}},\ldots,b_{Q_{\ell}},b_{\left\{
n\right\}  }\right) \nonumber\\
&  \qquad+\sum_{j=1}^{\ell}f_{\ell}\left(  b_{Q_{1}},b_{Q_{2}},\ldots
,b_{Q_{j}\cup\left\{  n\right\}  },\ldots,b_{Q_{\ell}}\right)
\label{eq.darij2.t4.pf.5}%
\end{align}
(where \textquotedblleft$b_{Q_{1}},b_{Q_{2}},\ldots,b_{Q_{j}\cup\left\{
n\right\}  },\ldots,b_{Q_{\ell}}$\textquotedblright\ means \textquotedblleft
take the list $b_{Q_{1}},b_{Q_{2}},\ldots,b_{Q_{\ell}}$ and replace its $j$-th
entry by $b_{Q_{j}\cup\left\{  n\right\}  }$\textquotedblright).

Now, let $j\in\left[  \ell\right]  $. Then, $Q_{j}\subseteq\left[  n-1\right]
$ (since $Q_{j}$ is a block of $\omega\in\Pi_{\left[  n-1\right]  }$) and
thus
\[
b_{Q_{j}\cup\left\{  n\right\}  }=b_{Q_{j}}b_{\left\{  n\right\}  }-\sum_{i\in
Q_{j}}b_{Q_{j}}^{(i)}%
\]
(by \eqref{eq.darij2.t4.pf.bPn}, applied to $P=Q_{j}$). Hence,
\begin{align}
&  f_{\ell}\left(  b_{Q_{1}},b_{Q_{2}},\ldots,b_{Q_{j}\cup\left\{  n\right\}
},\ldots,b_{Q_{\ell}}\right) \nonumber\\
&  =f_{\ell}\left(  b_{Q_{1}},b_{Q_{2}},\ldots,b_{Q_{j}}b_{\left\{  n\right\}
}-\sum_{i\in Q_{j}}b_{Q_{j}}^{(i)},\ldots,b_{Q_{\ell}}\right) \nonumber\\
&  =f_{\ell}\left(  b_{Q_{1}},b_{Q_{2}},\ldots,b_{Q_{j}}b_{\left\{  n\right\}
},\ldots,b_{Q_{\ell}}\right) \nonumber\\
&  \qquad-\sum_{i\in Q_{j}}f_{\ell}\left(  b_{Q_{1}},b_{Q_{2}},\ldots
,b_{Q_{j}}^{(i)},\ldots,b_{Q_{\ell}}\right)  \label{eq.darij2.t4.pf.6}%
\end{align}
(since the map $f_{\ell}$ is $\mathbb{Z}$-multilinear and thus, in particular,
linear in its $j$-th argument). Moreover, for each $k\in\left[  \ell\right]
\setminus\left\{  j\right\}  $ and each $i\in Q_{j}$, we have $Q_{k}\cap
Q_{j}=\varnothing$ (since $\omega$ is a set partition, so that its blocks are
disjoint) and thus $i\notin Q_{k}$ (since $i\in Q_{j}$) and therefore
$b_{Q_{k}}=b_{Q_{k}}^{(i)}$ (by \eqref{eq.darij2.t4.pf.bP=bPi}, applied to
$P=Q_{k}$). Hence, in the sum on the right hand side of
\eqref{eq.darij2.t4.pf.6}, we can replace each $b_{Q_{k}}$ by $b_{Q_{k}}%
^{(i)}$. Thus, \eqref{eq.darij2.t4.pf.6} rewrites as
\begin{align}
&  f_{\ell}\left(  b_{Q_{1}},b_{Q_{2}},\ldots,b_{Q_{j}\cup\left\{  n\right\}
},\ldots,b_{Q_{\ell}}\right) \nonumber\\
&  =f_{\ell}\left(  b_{Q_{1}},b_{Q_{2}},\ldots,b_{Q_{j}}b_{\left\{  n\right\}
},\ldots,b_{Q_{\ell}}\right) \nonumber\\
&  \qquad-\sum_{i\in Q_{j}}f_{\ell}\underbrace{\left(  b_{Q_{1}}%
^{(i)},b_{Q_{2}}^{(i)},\ldots,b_{Q_{j}}^{(i)},\ldots,b_{Q_{\ell}}%
^{(i)}\right)  }_{=\left(  b_{Q_{1}}^{(i)},b_{Q_{2}}^{(i)},\ldots,b_{Q_{\ell}%
}^{(i)}\right)  }\nonumber\\
&  =f_{\ell}\left(  b_{Q_{1}},b_{Q_{2}},\ldots,b_{Q_{j}}b_{\left\{  n\right\}
},\ldots,b_{Q_{\ell}}\right) \nonumber\\
&  \qquad-\sum_{i\in Q_{j}}f_{\ell}\left(  b_{Q_{1}}^{(i)},b_{Q_{2}}%
^{(i)},\ldots,b_{Q_{\ell}}^{(i)}\right)  . \label{eq.darij2.t4.pf.7}%
\end{align}

Forget that we fixed $j$. Summing the equality \eqref{eq.darij2.t4.pf.7} over
all $j\in\left[  \ell\right]  $, we find
\begin{align}
&  \sum_{j=1}^{\ell}f_{\ell}\left(  b_{Q_{1}},b_{Q_{2}},\ldots,b_{Q_{j}%
\cup\left\{  n\right\}  },\ldots,b_{Q_{\ell}}\right) \nonumber\\
&  =\sum_{j=1}^{\ell}\Bigg(f_{\ell}\left(  b_{Q_{1}},b_{Q_{2}},\ldots
,b_{Q_{j}}b_{\left\{  n\right\}  },\ldots,b_{Q_{\ell}}\right) \nonumber\\
&  \qquad-\sum_{i\in Q_{j}}f_{\ell}\left(  b_{Q_{1}}^{(i)},b_{Q_{2}}%
^{(i)},\ldots,b_{Q_{\ell}}^{(i)}\right)  \Bigg)\nonumber\\
&  =\sum_{j=1}^{\ell}f_{\ell}\left(  b_{Q_{1}},b_{Q_{2}},\ldots,b_{Q_{j}%
}b_{\left\{  n\right\}  },\ldots,b_{Q_{\ell}}\right) \nonumber\\
&  \qquad-\sum_{j=1}^{\ell}\ \ \sum_{i\in Q_{j}}f_{\ell}\left(  b_{Q_{1}%
}^{(i)},b_{Q_{2}}^{(i)},\ldots,b_{Q_{\ell}}^{(i)}\right)  .
\label{eq.darij2.t4.pf.8}%
\end{align}
In this equality, we can replace the two summation signs $\sum_{j=1}^{\ell
}\ \ \sum_{i\in Q_{j}}$ by $\sum_{i=1}^{n-1}$ (since the sets $Q_{1}%
,Q_{2},\ldots,Q_{\ell}$ form a set partition of $\left[  n-1\right]  $, so
that each $i\in\left[  n-1\right]  $ appears in exactly one of these sets).
Thus, this equality simplifies to
\begin{align}
&  \sum_{j=1}^{\ell}f_{\ell}\left(  b_{Q_{1}},b_{Q_{2}},\ldots,b_{Q_{j}%
\cup\left\{  n\right\}  },\ldots,b_{Q_{\ell}}\right) \nonumber\\
&  =\sum_{j=1}^{\ell}f_{\ell}\left(  b_{Q_{1}},b_{Q_{2}},\ldots,b_{Q_{j}%
}b_{\left\{  n\right\}  },\ldots,b_{Q_{\ell}}\right) \nonumber\\
&  \qquad-\sum_{i=1}^{n-1}f_{\ell}\left(  b_{Q_{1}}^{(i)},b_{Q_{2}}%
^{(i)},\ldots,b_{Q_{\ell}}^{(i)}\right)  . \label{eq.darij2.t4.pf.9}%
\end{align}
Substituting this into \eqref{eq.darij2.t4.pf.5}, we find
\begin{align}
&  \sum_{\substack{\pi=\left\{  P_{1},P_{2},\ldots,P_{k}\right\}  \in
\Pi_{\left[  n\right]  };\\\operatorname*{del}\left(  \pi\right)  =\omega
}}f_{k}\left(  b_{P_{1}},b_{P_{2}},\ldots,b_{P_{k}}\right) \nonumber\\
&  =f_{\ell+1}\left(  b_{Q_{1}},b_{Q_{2}},\ldots,b_{Q_{\ell}},b_{\left\{
n\right\}  }\right) \nonumber\\
&  \qquad+\sum_{j=1}^{\ell}f_{\ell}\left(  b_{Q_{1}},b_{Q_{2}},\ldots
,b_{Q_{j}}b_{\left\{  n\right\}  },\ldots,b_{Q_{\ell}}\right) \nonumber\\
&  \qquad-\sum_{i=1}^{n-1}f_{\ell}\left(  b_{Q_{1}}^{(i)},b_{Q_{2}}%
^{(i)},\ldots,b_{Q_{\ell}}^{(i)}\right)  . \label{eq.darij2.t4.pf.10}%
\end{align}

However, the recursion \eqref{eq.darij1.2} (applied to $\ell+1$ and $\left(
b_{Q_{1}},b_{Q_{2}},\ldots,b_{Q_{\ell}},b_{\left\{  n\right\}  }\right)  $
instead of $n$ and $\left(  a_{1},a_{2},\ldots,a_{n}\right)  $) yields
\begin{align*}
&  f_{\ell+1}\left(  b_{Q_{1}},b_{Q_{2}},\ldots,b_{Q_{\ell}},b_{\left\{
n\right\}  }\right) \\
&  =f\left(  b_{\left\{  n\right\}  }\right)  f_{\ell}\left(  b_{Q_{1}%
},b_{Q_{2}},\ldots,b_{Q_{\ell}}\right) \\
&  \qquad-\sum_{i=1}^{\ell}f_{\ell}\left(  b_{Q_{1}},b_{Q_{2}},\ldots
,b_{Q_{i}}b_{\left\{  n\right\}  },\ldots,b_{Q_{\ell}}\right) \\
&  =f\left(  b_{\left\{  n\right\}  }\right)  f_{\ell}\left(  b_{Q_{1}%
},b_{Q_{2}},\ldots,b_{Q_{\ell}}\right) \\
&  \qquad-\sum_{j=1}^{\ell}f_{\ell}\left(  b_{Q_{1}},b_{Q_{2}},\ldots
,b_{Q_{j}}b_{\left\{  n\right\}  },\ldots,b_{Q_{\ell}}\right)  .
\end{align*}
In other words,
\begin{align*}
&  f_{\ell+1}\left(  b_{Q_{1}},b_{Q_{2}},\ldots,b_{Q_{\ell}},b_{\left\{
n\right\}  }\right) \\
&  \qquad+\sum_{j=1}^{\ell}f_{\ell}\left(  b_{Q_{1}},b_{Q_{2}},\ldots
,b_{Q_{j}}b_{\left\{  n\right\}  },\ldots,b_{Q_{\ell}}\right) \\
&  =f\left(  b_{\left\{  n\right\}  }\right)  f_{\ell}\left(  b_{Q_{1}%
},b_{Q_{2}},\ldots,b_{Q_{\ell}}\right)  .
\end{align*}
Substituting this into \eqref{eq.darij2.t4.pf.10}, we obtain
\begin{align}
&  \sum_{\substack{\pi=\left\{  P_{1},P_{2},\ldots,P_{k}\right\}  \in
\Pi_{\left[  n\right]  };\\\operatorname*{del}\left(  \pi\right)  =\omega
}}f_{k}\left(  b_{P_{1}},b_{P_{2}},\ldots,b_{P_{k}}\right) \nonumber\\
&  =f\left(  b_{\left\{  n\right\}  }\right)  f_{\ell}\left(  b_{Q_{1}%
},b_{Q_{2}},\ldots,b_{Q_{\ell}}\right) \nonumber\\
&  \qquad-\sum_{i=1}^{n-1}f_{\ell}\left(  b_{Q_{1}}^{(i)},b_{Q_{2}}%
^{(i)},\ldots,b_{Q_{\ell}}^{(i)}\right)  . \label{eq.darij2.t4.pf.12}%
\end{align}

Forget that we fixed $\omega$. So we have proved \eqref{eq.darij2.t4.pf.12}
for each partition $\omega=\left\{  Q_{1},Q_{2},\ldots,Q_{\ell}\right\}
\in\Pi_{\left[  n-1\right]  }$. Now, \eqref{eq.darij2.t4.pf.3} becomes
\begin{align*}
&  \sum_{\pi=\left\{  P_{1},P_{2},\ldots,P_{k}\right\}  \in\Pi_{\left[
n\right]  }}f_{k}\left(  b_{P_{1}},b_{P_{2}},\ldots,b_{P_{k}}\right) \\
&  =\sum_{\omega=\left\{  Q_{1},Q_{2},\ldots,Q_{\ell}\right\}  \in\Pi_{\left[
n-1\right]  }}\ \ \sum_{\substack{\pi=\left\{  P_{1},P_{2},\ldots
,P_{k}\right\}  \in\Pi_{\left[  n\right]  };\\\operatorname*{del}\left(
\pi\right)  =\omega}}f_{k}\left(  b_{P_{1}},b_{P_{2}},\ldots,b_{P_{k}}\right)
\\
&  =\sum_{\omega=\left\{  Q_{1},Q_{2},\ldots,Q_{\ell}\right\}  \in\Pi_{\left[
n-1\right]  }}\Bigg(f\left(  b_{\left\{  n\right\}  }\right)  f_{\ell}\left(
b_{Q_{1}},b_{Q_{2}},\ldots,b_{Q_{\ell}}\right) \\
&  \qquad-\sum_{i=1}^{n-1}f_{\ell}\left(  b_{Q_{1}}^{(i)},b_{Q_{2}}%
^{(i)},\ldots,b_{Q_{\ell}}^{(i)}\right)  \Bigg)\qquad\left(  \text{by
\eqref{eq.darij2.t4.pf.12}}\right) \\
&  =f\left(  \underbrace{b_{\left\{  n\right\}  }}_{=g\left(  a_{n}\right)
}\right)  \underbrace{\sum_{\omega=\left\{  Q_{1},Q_{2},\ldots,Q_{\ell
}\right\}  \in\Pi_{\left[  n-1\right]  }}f_{\ell}\left(  b_{Q_{1}},b_{Q_{2}%
},\ldots,b_{Q_{\ell}}\right)  }_{\substack{=\sum_{\pi=\left\{  P_{1}%
,P_{2},\ldots,P_{k}\right\}  \in\Pi_{\left[  n-1\right]  }}f_{k}\left(
b_{P_{1}},b_{P_{2}},\ldots,b_{P_{k}}\right)  \\=h_{n-1}\left(  a_{1}%
,a_{2},\ldots,a_{n-1}\right)  \\\text{(by \eqref{eq.darij2.t4.pf.IH1})}}}\\
&  \qquad-\sum_{i=1}^{n-1}\ \ \underbrace{\sum_{\omega=\left\{  Q_{1}%
,Q_{2},\ldots,Q_{\ell}\right\}  \in\Pi_{\left[  n-1\right]  }}f_{\ell}\left(
b_{Q_{1}}^{(i)},b_{Q_{2}}^{(i)},\ldots,b_{Q_{\ell}}^{(i)}\right)
}_{\substack{=\sum_{\pi=\left\{  P_{1},P_{2},\ldots,P_{k}\right\}  \in
\Pi_{\left[  n-1\right]  }}f_{k}\left(  b_{P_{1}}^{(i)},b_{P_{2}}^{(i)}%
,\ldots,b_{P_{k}}^{(i)}\right)  \\=h_{n-1}\left(  a_{1}^{(i)},a_{2}%
^{(i)},\ldots,a_{n-1}^{(i)}\right)  \\\text{(by \eqref{eq.darij2.t4.pf.IH2})}%
}}\\
&  =\underbrace{f\left(  g\left(  a_{n}\right)  \right)  }%
_{\substack{=h\left(  a_{n}\right)  \\\text{(since }f\circ g=h\text{)}%
}}h_{n-1}\left(  a_{1},a_{2},\ldots,a_{n-1}\right) \\
&  \qquad-\sum_{i=1}^{n-1}h_{n-1}\underbrace{\left(  a_{1}^{(i)},a_{2}%
^{(i)},\ldots,a_{n-1}^{(i)}\right)  }_{\substack{=\left(  a_{1},a_{2}%
,\ldots,a_{i-1},a_{i}a_{n},a_{i+1},a_{i+2},\ldots,a_{n-1}\right)  \\\text{(by
\eqref{eq.darij2.t4.pf.a1i})}}}\\
&  =h\left(  a_{n}\right)  h_{n-1}\left(  a_{1},a_{2},\ldots,a_{n-1}\right) \\
&  \qquad-\sum_{i=1}^{n-1}h_{n-1}\left(  a_{1},a_{2},\ldots,a_{i-1},a_{i}%
a_{n},a_{i+1},a_{i+2},\ldots,a_{n-1}\right)  .
\end{align*}

On the other hand, the recursion \eqref{eq.darij1.2} yields
\begin{align*}
&  h_{n}\left(  a_{1},a_{2},\ldots,a_{n}\right) \\
&  =h\left(  a_{n}\right)  h_{n-1}\left(  a_{1},a_{2},\ldots,a_{n-1}\right) \\
&  \qquad-\sum_{i=1}^{n-1}h_{n-1}\left(  a_{1},a_{2},\ldots,a_{i-1},a_{i}%
a_{n},a_{i+1},a_{i+2},\ldots,a_{n-1}\right)  .
\end{align*}
Comparing these two equalities, we find
\[
h_{n}\left(  a_{1},a_{2},\ldots,a_{n}\right)  =\sum_{\pi=\left\{  P_{1}%
,P_{2},\ldots,P_{k}\right\}  \in\Pi_{\left[  n\right]  }}f_{k}\left(
b_{P_{1}},b_{P_{2}},\ldots,b_{P_{k}}\right)  .
\]
This is precisely Theorem \ref{thm.4} for our $n$. So we have finished the
induction step, and thus the proof of Theorem \ref{thm.4}.
\end{proof}

\subsection{The composition of $n$-homomorphisms}

\begin{proof}
[Proof of Theorem \ref{thm.6}.]Let $h:=f\circ g:A\rightarrow C$. Thus, we must
show that $h$ is an $nm$-homomorphism. In other words, we must show that $h$
is central and satisfies $h_{nm+1}=0$.

Since $g$ is central, it is easy to see that $h$ is central as well (indeed,
all $a,a^{\prime}\in A$ satisfy
\begin{align*}
h\left(  aa^{\prime}\right)   &  =f\left(  g\left(  aa^{\prime}\right)
\right)  \qquad\left(  \text{since }h=f\circ g\right) \\
&  =f\left(  g\left(  a^{\prime}a\right)  \right)  \qquad\left(  \text{since
}g\text{ is central, so }g\left(  aa^{\prime}\right)  =g\left(  a^{\prime
}a\right)  \right) \\
&  =h\left(  a^{\prime}a\right)  \qquad\left(  \text{since }h=f\circ g\right)
,
\end{align*}
which shows that $h$ is central). Thus, it remains to prove that $h_{nm+1}=0$.

Set $N:=nm+1$. Let $a_{1},a_{2},\ldots,a_{N}\in A$. We shall prove that
$h_{N}\left(  a_{1},a_{2},\ldots,a_{N}\right)  =0$.

For any subset $P=\left\{  p_{1},p_{2},\ldots,p_{r}\right\}  $ of $\left[
N\right]  $ (with $p_{1},p_{2},\ldots,p_{r}$ distinct), we set
\[
b_{P}:=g_{r}\left(  a_{p_{1}},a_{p_{2}},\ldots,a_{p_{r}}\right)  .
\]
(This is well-defined for the same reason as in Theorem \ref{thm.4}.) Then,
Theorem \ref{thm.4} (applied to $N$ instead of $n$) shows that
\begin{equation}
h_{N}\left(  a_{1},a_{2},\ldots,a_{N}\right)  =\sum_{\pi=\left\{  P_{1}%
,P_{2},\ldots,P_{k}\right\}  \in\Pi_{\left[  N\right]  }}f_{k}\left(
b_{P_{1}},b_{P_{2}},\ldots,b_{P_{k}}\right)  \label{eq.darij2.t6.pf.1}%
\end{equation}
(where $f_{k}\left(  b_{P_{1}},b_{P_{2}},\ldots,b_{P_{k}}\right)  $ is
well-defined for the same reason as in Theorem \ref{thm.4}).

However, we claim that any partition $\pi=\left\{  P_{1},P_{2},\ldots
,P_{k}\right\}  \in\Pi_{\left[  N\right]  }$ satisfies
\begin{equation}
f_{k}\left(  b_{P_{1}},b_{P_{2}},\ldots,b_{P_{k}}\right)  =0.
\label{eq.darij2.t6.pf.2}%
\end{equation}

[\textit{Proof:} Let $\pi=\left\{  P_{1},P_{2},\ldots,P_{k}\right\}  \in
\Pi_{\left[  N\right]  }$ be any partition. Since $f$ is an $n$-homomorphism,
we have $f_{n+1}=0$. Thus, if $k\geq n+1$, then $f_{k}=0$ as well (by Lemma
\ref{lem.5}, applied to $B$, $C$, $n+1$ and $k$ instead of $A$, $B$, $m$ and
$n$), and therefore \eqref{eq.darij2.t6.pf.2} certainly holds in this case.
Hence, for the rest of this proof of \eqref{eq.darij2.t6.pf.2}, we WLOG assume
that $k<n+1$. Therefore, $k\leq n$, so that $n\geq k$. Now, since $\left\{
P_{1},P_{2},\ldots,P_{k}\right\}  $ is a partition of $\left[  N\right]  $, we
have
\[
\left\vert P_{1}\right\vert +\left\vert P_{2}\right\vert +\cdots+\left\vert
P_{k}\right\vert =\left\vert \left[  N\right]  \right\vert
=N=nm+1>\underbrace{n}_{\geq k}m\geq km.
\]
Hence, at least one of the $k$ addends $\left\vert P_{1}\right\vert
,\left\vert P_{2}\right\vert ,\ldots,\left\vert P_{k}\right\vert $ must be
larger than $m$ (because otherwise, we would have $\left\vert P_{1}\right\vert
\leq m$ and $\left\vert P_{2}\right\vert \leq m$ and $\ldots$ and $\left\vert
P_{k}\right\vert \leq m$, and therefore, adding these $k$ inequalities
together, we would obtain $\left\vert P_{1}\right\vert +\left\vert
P_{2}\right\vert +\cdots+\left\vert P_{k}\right\vert \leq
\underbrace{m+m+\cdots+m}_{k\text{ times}}=km$, which would contradict
$\left\vert P_{1}\right\vert +\left\vert P_{2}\right\vert +\cdots+\left\vert
P_{k}\right\vert >km$). In other words, there exists some $j\in\left[
k\right]  $ such that $\left\vert P_{j}\right\vert >m$. Consider this $j$.
Now, recall that $g$ is an $m$-homomorphism; thus, $g_{m+1}=0$. Write the set
$P_{j}$ as $P_{j}=\left\{  p_{1},p_{2},\ldots,p_{r}\right\}  $ (with
$p_{1},p_{2},\ldots,p_{r}$ distinct); thus, $r=\left\vert P_{j}\right\vert
>m$. Therefore, $r\geq m+1$. Hence, from $g_{m+1}=0$, we obtain $g_{r}=0$ (by
Lemma \ref{lem.5}, applied to $g$, $m+1$ and $r$ instead of $f$, $m$ and $n$).

But the definition of $b_{P_{j}}$ yields $b_{P_{j}}=g_{r}\left(  a_{p_{1}%
},a_{p_{2}},\ldots,a_{p_{r}}\right)  =0$ (since $g_{r}=0$). Thus,
$f_{k}\left(  b_{P_{1}},b_{P_{2}},\ldots,b_{P_{k}}\right)  =0$ as well (since
$f_{k}$ is a $\mathbb{Z}$-multilinear map, and thus vanishes if any of the $k$
inputs is $0$). This proves \eqref{eq.darij2.t6.pf.2}.] \medskip

Now, substituting \eqref{eq.darij2.t6.pf.2} into \eqref{eq.darij2.t6.pf.1}, we
find
\[
h_{N}\left(  a_{1},a_{2},\ldots,a_{N}\right)  =\sum_{\pi=\left\{  P_{1}%
,P_{2},\ldots,P_{k}\right\}  \in\Pi_{\left[  N\right]  }}0=0.
\]
Since we have proved this for all $a_{1},a_{2},\ldots,a_{N}\in A$, we thus
obtain $h_{N}=0$. In other words, $h_{nm+1}=0$ (since $N=nm+1$). As we said,
this completes the proof of Theorem \ref{thm.6}.
\end{proof}

\end{document}